
\NeedsTeXFormat{LaTeX2e}

\documentclass{lmsMODIFIED}


\usepackage{amsmath, amscd, amsfonts, amssymb, graphicx}

\newtheorem{theorem}{Theorem}[section] 
\newtheorem{lemma}[theorem]{Lemma}     

\newtheorem{proposition}[theorem]{Proposition}

\newnumbered{definition}[theorem]{Definition}
\newnumbered{remark}[theorem]{Remark}
\newnumbered{example}[theorem]{Example}




\newcommand{\abs}[1]{\vert#1\vert}

\newcommand{\Comp}{\mathbb{C}}

\newcommand{\ROI}[1]{O_{#1}}
\newcommand{\roi}[1]{\mathcal{O}_{#1}}

\newcommand{\Res}[1]{\overline{#1}}
\newcommand{\res}[1]{\overline{#1}}
\newcommand{\Coset}[2]{#1+t(#2)\roi{F}}
\newcommand{\g}{\gamma}
\newcommand{\calL}{\mathcal{L}}

\newcommand{\Char}[1]{\mbox{char}_{#1}}

\newcommand{\al}{\alpha}
\newcommand{\CG}{\Comp(\Gamma)}

\newcommand{\mult}[1]{#1^{\times}}

\newcommand{\comment}[1]{}

\newcommand{\GL}[2]{\mbox{GL}_{#1}(#2)}
\newcommand{\SL}[2]{\mbox{SL}_{#1}(#2)}

\newcommand{\M}[2]{\mbox{M}_{#1}(#2)}
\newcommand{\no}[1]{\dot{#1}}
\newcommand{\G}{\Gamma}
\newcommand{\ssc}[1]{\mbox{\scriptsize $#1$}}


\title[Integration on product spaces and $\mbox{GL}_n$]{Integration on product spaces and $\mbox{GL}_n$ of a valuation field over a local field}

\author{Matthew T. Morrow}


\classno{11S80 (primary), 20G25, 20G05, 28C10 (secondary)}

\extraline{The author is supported by an EPSRC Doctoral Training Grant at the University of Nottingham.}


\begin{document}

\maketitle

\begin{abstract}
We present elements of a theory of translation-invariant integration on finite dimensional vector spaces and on $\mbox{GL}_n$ over a valuation field with local field as residue field. We then discuss the case of an arbitrary algebraic group. This extends the work of Fesenko.
\end{abstract}

\subsection*{Introduction}
This paper addresses the problem of measure and integration on a finite dimensional vector space and on $\mbox{GL}_n$ over a valuation field whose residue field is a local field. This, and the more fundamental problem of integration over the field itself have been considered by Fesenko \cite{Fesenko-analysis-on-arithmetic-schemes} \cite{Fesenko-vector-spaces} \cite{Fesenko-analysis-on-loop-spaces} and Kim and Lee \cite{Kim-Lee2} \cite{Kim-Lee1} in the case of a higher dimensional local field, and by the author \cite{Morrow_1}. Far more general results of Hrushovski and Kazhdan using model theory \cite{Hrushovski-Kazhdan1} \cite{Hrushovski-Kazhdan2} treat the case in which $\res{F}$ has characteristic zero.

Such a theory has applications in the representation theory of two-dimensional local fields (see \cite{Kim-Lee2}) and related problems in the Langlands programme.

We now outline the contents of the paper.

Let $F$ be a valued field with arbitrary value group and ring of integers $\roi{F}$, whose residue field is a non-discrete locally compact field; let $\CG$ be the field of fractions of the complex group algebra of $\Gamma$. In \cite{Morrow_1} the author used ideas of Fesenko \cite{Fesenko-analysis-on-arithmetic-schemes} \cite{Fesenko-analysis-on-loop-spaces} to introduce elements of a theory of integration over $F$ with values in $\CG$. In the first section, we give a summary of the results required for this paper.

In the second section, the integral on $F$ is extended to $F^n$ using repeated integration. So that Fubini's theorem holds, we consider $\CG$-valued functions $f$ on $F^n$ such that for any permutation $\sigma$ of $\{1,\dots,n\}$ the repeated integral \[\int^F\dots\int^F f(x_1,\dots,x_n)\,dx_{\sigma(1)}\dots \,dx_{\sigma(n)}\] is well defined, and its value does not depend on $\sigma$; such a function is called Fubini.

Now suppose that $g$ is a Schwartz-Bruhat function on $\res{F}^n$; let $f$ be the complex-valued function on $F^n$ which vanishes on $\roi{\res{F}}^n$, and satisfies \[f(x_1,\dots,x_n)=g(\res{x}_1,\dots,\res{x}_n)\] for $x_1,\dots,x_n\in\roi{F}$. $f$ is shown to be Fubini in the second section. In proposition \ref{prop_GL_is_Fubini} it is shown that if $a\in F$ and $\tau\in\GL{N}{F}$, then $x\mapsto f(a+\tau x)$ is also Fubini and \[\int^{F^n} f(a+\tau x)\,dx=\abs{\det\tau}^{-1}\int^{F^n}f(x)\,dx\tag{$\ast$}\] where $\abs{\cdot}$ is an absolute value on $F$. The main result of the third section, theorem \ref{theorem_main_result}, easily follows: there exists a space of Fubini functions $\calL(F^n,\mbox{GL}_n)$ such that $\calL(F^n,\mbox{GL}_n)$ is closed under affine changes of variable, with ($\ast$) holding for $f\in\calL(F^n,\mbox{GL}_n)$.

Next, just as in the classical case of a local field, we look at $\CG$-valued functions $\phi$ on $\GL{n}{F}$, for which $\tau\mapsto\phi(\tau)\abs{\det\tau}^{-n}$ belongs to $\calL(F^{n^2})$, having identified $F^{n^2}$ with the space of $n\times n$ matrices over $F$. This leads to an integral on $\GL{n}{F}$ which is left and right translation invariant, and which lifts the Haar integral on $\GL{n}{\res{F}}$ in a certain sense.

Finally we discuss extending the theory to the case of an arbirary algebraic group.

\subsection*{Acknowledgements}
I. Fesenko provided invaluable help during the writing of this text.

\subsection*{Notation}
Let $\Gamma$ be a totally ordered group and $F$ a field with a valuation $\nu:F^{\times}\to\Gamma$ with residue field $\Res{F}$, ring of integers $\roi{F}$ and residue map $\rho:\roi{F}\to\Res{F}$ (also denoted by an overline). Suppose further that the valuation is split; that is, there exists a homomorphism $t:\Gamma\to F^{\times}$ such that $\nu\circ t=\mbox{id}_\Gamma$.

Sets of the form $\Coset a{\g}$ are called \emph{translated fractional ideals}.

$\CG$ denotes the field of fractions of the complex group algebra $\Gamma$; the basis element of the group algebra corresponding to $\g\in\Gamma$ shall be written as $X^{\g}$ rather than as $\g$. With this notation, $X^{\g}X^{\delta}=X^{\g+\delta}$. Note that if $\Gamma$ is a free abelian group of finite rank $n$, then $\CG$ is isomorphic to the rational function field $\Comp(X_1,\dots,X_n)$.

We fix a choice of Haar measure on $\res{F}$. The measure on $\mult{\res{F}}$ is chosen to satisfy $\mult{d}x=\abs{x}^{-1}d^+x$, and the measure on $\res{F}^m$ is always the product measure.

\begin{remark*}
The assumptions above hold for a higher dimensional local field. For basic definitions and properties of such fields, see \cite{IHLF}.

Indeed, suppose that $F=F_n$ is a higher dimensional local field of dimension $n\ge2$: we allow the case in which $F_1$ is an archimedean local field. If $F_1$ is non-archimedean, instead of the usual rank $n$ valuation $\mathbf{v}:F^{\times}\to\mathbb{Z}^n$, let $\nu$ be the $n-1$ components of $\mathbf{v}$ corresponding to the fields $F_n,\dots,F_2$; note that $\mathbf{v}=(\nu_{\Res{F}}\circ\eta,\nu)$. If $F_1$ is archimedean, then $F$ may be similarly viewed as an valuation field with value group $\mathbb{Z}^{n-1}$ and residue field $F_1$.

The residue field of $F$ with respect to $\nu$ is the local field $\Res{F}=F_1$. If $F$ is non-archimedean, then the ring of integers $\ROI{F}$ of $F$ with respect to the rank $n$ valuation is equal to $\rho^{-1}(\roi{\Res{F}})$, while the groups of units $\mult{\ROI{F}}$ with respect to the rank $n$ valuation is equal to $\rho^{-1}(\roi{\Res{F}}^{\times})$.
\end{remark*}

\section{Integration on $F$}

In \cite{Morrow_1} a theory of integration on $F$ taking values in the field $\CG$ is developed. We repeat here the definitions and main results.

\begin{definition}
For $g$ a function on $\Res{F}$ taking values in an abelian group $A$, set
\begin{align*}
	g^0:F&\to A\\
	x&\mapsto \begin{cases}
		g(\Res{x}) & x\in\roi{F} \\
		0 & \mbox{otherwise.}
		\end{cases}
\end{align*}

More generally, for $a\in F$, $\g\in\Gamma$, the \emph{lift of $g$ at $a,\g$} is the $A$-valued function on $F$ defined by
\[g^{a,\g}(x)=
    \begin{cases}
    g(\Res{(x-a)t(-\g)}) & \quad x\in a+t(\g)\roi{F} \\
    0 & \quad\mbox{otherwise}
    \end{cases}
\]

Note that $g^{0,0}=g^0$ and $g^{a,\g}(a+t(\g)x)=g^0(x)$ for all $x\in F$.
\end{definition}

\begin{definition}
Let $\calL$ denote the space of complex-valued Haar integrable functions on $\res{F}$. A \emph{simple} function on $F$ is a $\CG$-valued function of the form \[x\mapsto g^{a,\g}(x)\,X^{\delta}\] for some $g\in\calL$, $a\in F$, $\g,\delta\in\Gamma$.

Let $\calL(F)$ denote the $\CG$ space of all $\CG$-valued functions spanned by the simple functions; such functions are said to be \emph{integrable} on $F$. 
\end{definition}

\begin{remark}\label{remark_translation_and_scaling_of_integrable_functions}
Note that the space of integrable functions is the smallest $\CG$ space of $\CG$-valued functions on $F$ with the following properties:
\begin{enumerate}
\item If $g\in\calL$, then $g^0\in\calL(F)$.
\item If $f\in\calL(F)$ and $a\in F$ then $\calL(F)$ contains $x\mapsto f(x+a)$.
\item If $f\in\calL(F)$ and $\al\in\mult{F}$ then $\calL(F)$ contains $x\mapsto f(\al x)$.
\end{enumerate}

In fact, it is clear that if $f$ is simple then for $a\in F$ and $\al\in\mult{F}$, the functions $x\mapsto f(x+a)$ and $x\mapsto f(\al x)$ are also simple.
\end{remark}

The main result on existence and properties of an integral is as follows:

\begin{theorem}\label{theorem_main_properties_of_integral}
There is a unique $\CG$-linear functional $\int^F$ on $\calL(F)$ which satisfies 
\begin{enumerate}
\item $\int^F$ lifts the usual integral on $\res{F}$: for $g\in\calL$, \[\int^F(g^0)=\int g(u)\,du;\]
\item Translation invariance: for $f\in\calL(F)$, $a\in F$, \[\int^F f(x+a)\,dx=\int^F f(x)\,dx;\]
\item Compatibility with multiplicative structure: for $f\in\calL(F)$, $\al\in\mult{F}$, \[\int^F f(\al x)\,dx=\abs{\al}^{-1}\int^F f(x)\,dx.\]
\end{enumerate}
Here the \emph{absolute value} of $\al$ is defined by $\abs{\al}=\abs{\al t(-\nu(\al))}X^{\nu(\al)}$, and we have adopted the customary integral notation $\int^F(f)=\int^F f(x)\,dx$.
\end{theorem}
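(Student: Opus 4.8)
The plan is to handle uniqueness and existence separately. Uniqueness is the easier half: the three properties force the value of the functional on every simple function. Indeed, straight from the definition one has $g^{a,\g}(x)=g^0\big((x-a)t(-\g)\big)$, so a simple function $g^{a,\g}\,X^{\delta}$ is an affine pull-back of the lift $g^0$. If $J$ satisfies (i)--(iii), then property (ii) lets us discard the translation by $a$ and property (iii) applied with $\al=t(-\g)$ gives $J(g^{a,\g}\,X^{\delta})=X^{\delta}\,\abs{t(-\g)}^{-1}\,J(g^0)$; since $\nu\circ t=\mathrm{id}$ the definition of the absolute value yields $\abs{t(-\g)}=X^{-\g}$, and (i) then gives $J(g^{a,\g}\,X^{\delta})=X^{\g+\delta}\int g(u)\,du$. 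As the simple functions span $\calL(F)$ over $\CG$ and $J$ is $\CG$-linear, $J$ is determined.

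For existence I would simply \emph{define} $\int^F$ on simple functions by $\int^F(g^{a,\g}\,X^{\delta})=X^{\g+\delta}\int g(u)\,du$ and extend by $\CG$-linearity. The content is to check that this is consistent. Clearing denominators and comparing the coefficient of each $X^{\g}$ (the values $g^{a,\g}(x)$ are complex scalars), consistency reduces to one claim: \emph{if $g_1,\dots,g_n\in\calL$, $a_i\in F$, $\g_i\in\G$ and $\sum_{i=1}^n g_i^{a_i,\g_i}=0$ as a function on $F$, then $\sum_{i=1}^n X^{\g_i}\int g_i=0$.} Granting this, properties (i)--(iii) drop out by direct computation: (i) is immediate, and for (ii) and (iii) one notes (cf. Remark~\ref{remark_translation_and_scaling_of_integrable_functions}) that $x\mapsto g^{a,\g}(x+b)$ and $x\mapsto g^{a,\g}(\al x)$ are themselves simple functions --- the lifts $g^{\,a-b,\g}$ and $h^{\,\al^{-1}a,\,\g-\nu(\al)}$ with $h(u)=g\big(\overline{\al t(-\nu(\al))}\,u\big)$ --- after which (ii) and (iii) follow from translation- and scaling-invariance of the chosen Haar measure on $\res{F}$.

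The key step --- and, I expect, the main obstacle --- is the vanishing claim, which I would prove by strong induction on $n$, exploiting that translated fractional ideals form a laminar family: any two are disjoint or nested. Write $U_i=a_i+t(\g_i)\roi{F}$ and choose $U$ maximal among the $U_i$. If some $U_i$ is disjoint from $U$, then restricting the relation to $U$ and to $F\less U$ splits it into two shorter relations, and induction applies. Otherwise every $U_i\subseteq U$, so $U=U_{i_0}$ for some $i_0$. If all the $U_i$ equal $U$, pulling the relation back along $x\mapsto a+t(\g)x$ turns it into $\sum_i\widetilde g_i=0$ in $\calL$, where the $\widetilde g_i$ are translates of the $g_i$, whence $\sum_i\int g_i=0$ by translation-invariance of Haar measure. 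If not, let $W_1,\dots,W_k$ be the maximal members of $\{U_i:U_i\subsetneq U\}$ --- pairwise disjoint --- and put $C=\{i:U_i=U\}$ and $D_\ell=\{i:U_i\subseteq W_\ell\}$; note $C\neq\varnothing$. Pulling the whole relation back along $x\mapsto a+t(\g)x$, each term with $i\in\bigcup_\ell D_\ell$ is supported within $\rho^{-1}$ of a single point of $\res{F}$, so, discarding those finitely many residues, the $C$-part becomes a function of the residue alone vanishing off a finite set; as finite sets are Haar-null this gives $\sum_{i\in C}X^{\g_i}\int g_i=0$. Next, restricting the original relation to $W_\ell$, each term with $i\in C$ is constant there (a lift is constant on any fractional ideal strictly inside its support), so $\sum_{i\in D_\ell}g_i^{a_i,\g_i}$ equals a constant on $W_\ell$; by the same discarding trick, pulled back to $\roi{F}$ this constant would be an integrable constant function on $\res{F}$, hence $0$ since $\res{F}$ has infinite Haar measure. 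So $\sum_{i\in D_\ell}g_i^{a_i,\g_i}=0$, and since $C\neq\varnothing$ induction applies to each $D_\ell$; adding all contributions closes the induction.

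Thus the real difficulty is the absence of any ambient measure on $F$: the simple functions satisfy many relations, and from a mere identity of complex-valued functions on $F$ one must extract exactly enough information about residue-field integrals. The laminar structure of translated fractional ideals, combined with the two elementary facts that finite subsets of $\res{F}$ are Haar-null and that $\res{F}$ has infinite total Haar measure, is precisely what powers the induction; everything else is routine verification.
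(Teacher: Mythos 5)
Your proposal is correct. Note, though, that this paper does not prove the theorem at all --- its ``proof'' is simply the citation to \cite{Morrow_1} --- so the only meaningful comparison is with that reference, and your argument is essentially the one carried out there: uniqueness is forced on simple functions via $g^{a,\g}(x)=g^0\bigl((x-a)t(-\g)\bigr)$ and $\abs{t(-\g)}=X^{-\g}$ (consistently with remark \ref{remark_repeated_integral_of_lifted_function}), and existence reduces, after clearing denominators and comparing coefficients of the basis elements $X^{\epsilon}$ so that one may work with complex scalars, to the vanishing claim for relations $\sum_i g_i^{a_i,\g_i}=0$. Your induction on such a relation is sound: the translated fractional ideals do form a laminar family, equality $U_i=U$ forces $\g_i=\g$ so the $C$-part carries a single monomial $X^{\g}$, the terms with $U_i\subsetneq U$ pull back into single residue classes, and the two measure-theoretic inputs you isolate (points of $\res{F}$ are null, and $\res{F}$ has infinite Haar measure, so an integrable function cannot agree with a nonzero constant off a finite set) are exactly what kills the $C$-part and the constants $\kappa_\ell$, after which strong induction applies to each $D_\ell$ because $C\neq\varnothing$. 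The remaining verifications of (i)--(iii) on simple functions, including the identity $x\mapsto g^{a,\g}(\al x)=h^{\al^{-1}a,\,\g-\nu(\al)}$ with $h(u)=g(\overline{\al t(-\nu(\al))}\,u)$ and the module property of Haar measure on $\res{F}$, are routine as you say, so I see no gap.
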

\begin{proof}
See \cite{Morrow_1}.
\end{proof}

\begin{remark}\label{remark_repeated_integral_of_lifted_function}
If $g^{a,n}$ is the lift of a Haar integrable function, then  \[\int^Fg^{a,n}(x)\,dx=\int g(u)\,du\,X^n\].
\end{remark}
%
%

\section{Repeated Integration on $F^n$}

In this section we extend the integral on $F$ to the product space $F^n$ for $n$ a positive integer. We do this by using the integral over $F$ to define repeated integrals. The idea is simple, though the notation is not.

Given a sequence $x_1,\dots,x_n$ of $n$ terms, and $r$ such that $1\le r\le n$, the notation \[x_1,\dots,\no{x}_r,\dots,x_n\quad=\quad x_1,\dots,x_{r-1},x_{r+1},\dots,x_n\] denotes the sequence of $n-1$ terms obtained by removing the $r^{\mbox{\tiny th}}$ term.

We introduce the largest space of functions for which all repeated integrals exist and are equal:

\begin{definition}\label{definition_Fubini}
Let $f$ be a $\CG$-valued function on $F^n$. The inductive definition of $f$ being \emph{Fubini}, and the \emph{repeated integral} of $f$, are as follows:

If $n=1$, then $f$ is Fubini if and only if it is integrable, and the repeated integral of $f$ is defined to be its integral $\int^F f(x)\,dx$.

For $n>1$, $f$ is Fubini if and only if it satisfies the following conditions:
\begin{enumerate}
\item For each $r$ with $1\le r\le n$, and all $x_1,\dots,\no{x}_r,\dots,x_n$ in $F$, the function \[x_r\mapsto f(x_1,\dots,x_n)\] is required to be integrable on $F$, and then the function \[(x_1,\dots,\no{x}_r,\dots,x_n)\mapsto \int f(x_1,\dots,x_n)\,dx_r\] is is required to be Fubini on $F^{n-1}$.
\item Then we require that the repeated integral of $(x_1,\dots,\no{x}_r,\dots,x_n)\mapsto \int f(x_1,\dots,x_n)\,dx_r$ does not depend on $r$. The repeated integral of $f$ on $F^n$ is defined to be the common value of these $n$ repeated integrals on $F^{n-1}$.
\end{enumerate}

The repeated integral of a Fubini function $f$ on $F^n$  will be denoted $\int^{F^n} f(x)\,dx$.
\end{definition}

The repeated integral is a $\CG$-linear functional on the $\CG$-space of all Fubini functions on $F^n$.

\begin{remark}
Informally, a $\CG$-valued function $f$ is Fubini if and only if, for each permutation $\sigma$ of $\{1,\dots,n\}$, the expression \[\int^F\dots\int^F f(x_1,\dots,x_n)\,dx_{\sigma(1)}\dots \,dx_{\sigma(n)}\] is well defined and its value does not depend on $\sigma$. The repeated integral of $f$ is of course the common value of these $n!$ integrals.
\end{remark}

\begin{remark}\label{remarks_on_Fubini_functions}
We will also be interested in repeated integrals of complex-valued functions on $\res{F}^n$. Since the integration theory on $F$ does not allow for functions on $\res{F}$ which are perhaps only defined off a null set, we must ensure that such functions do not arise. Therefore we define a complex-valued function on $\res{F}^n$ to be Fubini if it is Haar integrable and satisfies the obvious rewording of definition \ref{definition_Fubini}. Informally, such a function if Fubini if and only if it is Haar integrable and each partial integral \[\int\dots\int f(x_1,\dots,x_n)\,dx_{\sigma(1)}\dots dx_{\sigma(r)}\] is defined for \emph{all} $x_{\sigma(r+1)}\dots x_{\sigma(n)}\in \res{F}$, where $\sigma$ is any permutation of $\{1,\dots,n\}$ and $1\le r\le n$. Fubini's theorem then implies that the value of the repeated integral \[\int^F\dots\int^F f(x_1 ,\dots,x_n)\,dx_{\sigma(1)}\dots \,dx_{\sigma(n)}\] is independent of $\sigma$.

Fubini's theorem and induction on $n$ imply that any integrable function on $\res{F}^n$ is almost everywhere equal to a Fubini function.

Any continuous complex-valued function on $\res{F}$ with compact support is Fubini, as is any Schwartz function if $\res{F}$ is archimedean. So the class of Fubini functions is still large enough for applications in representation theory, harmonic analysis, etc.
\end{remark}

In fact, most Fubini functions on $F^n$ encountered in this paper will be of the following form, which is generalisation of the notion of a simple function on $F$:

\begin{definition}
Let $f$ be a Fubini function on $F^n$; the inductive definition of $f$ being \emph{strongly Fubini} is as follows:

If $n=1$, then $g$ is strongly Fubini if and only if a simple function.

For $n>1$, $g$ is strongly Fubini if and only if the following holds: For each $r$ with $1\le r\le n$, and each $x_1,\dots,\no{x}_r,\dots,x_n$ in $F$, we require that \[x_r\mapsto f(x_1,\dots,x_n)\] is a simple function on $\res{F}$, and then that \[(x_1,\dots,\no{x}_r,\dots,x_n)\mapsto \int^F f(x_1,\dots,x_n)\,dx_r\] is strongly Fubini on $F^{n-1}$.
\end{definition}

The property of being strongly Fubini is preserved under translation and scaling, as is the weaker property of being Fubini. For $\al=(\al_1,\dots,\al_n)$ in ${\mult{F}}^n$ ($n$ copies of $\mult{F}$, not the group of $n^{\mbox{\tiny th}}$ powers of $\mult{F}$), write $\abs{\al}=\prod_i\abs{\al_i}$, where $\abs{\cdot}$ is the absolute value introduced in theorem \ref{theorem_main_properties_of_integral}.

\begin{lemma}\label{lemma_translation_and_scaling_of_Fubini_functions}
Suppose $f$ is a strongly Fubini (resp. Fubini) function on $F^n$. For $a\in F^n$ and $\al\in{\mult{F}}^n$, the functions $x\mapsto f(x+a)$ and $x\mapsto f(\al x)$ are strongly Fubini (resp. Fubini), with repeated integrals
\[\int^{F^n}f(x+a)\,dx=\int^{F^n}f(x)\,dx,\quad\quad\int^{F^n}f(\al x)\,dx=\abs{\al}^{-1}\int^{F^n}f(x)\,dx.\]
\end{lemma}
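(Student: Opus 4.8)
The plan is to induct on $n$, running the argument simultaneously for the strongly Fubini and the Fubini cases, since the two differ only in whether one works with simple functions or with integrable functions at the innermost variable. For $n=1$ there is nothing to do: a strongly Fubini function on $F$ is by definition a simple function and a Fubini function on $F$ is an integrable one, and by remark \ref{remark_translation_and_scaling_of_integrable_functions} both classes are closed under $x\mapsto f(x+a)$ and $x\mapsto f(\al x)$, while the two integral identities are exactly parts (ii) and (iii) of theorem \ref{theorem_main_properties_of_integral} (for a $1$-tuple $\al$, $\abs{\al}$ is the scalar absolute value of its entry).

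For the inductive step, suppose $n>1$ and that the lemma holds for $n-1$. Given $f$ Fubini on $F^n$ and $a=(a_1,\dots,a_n)\in F^n$, I would set $g(x)=f(x+a)$ and verify the two conditions of definition \ref{definition_Fubini}. Fixing $r$ and $x_1,\dots,\no{x}_r,\dots,x_n\in F$, the map $x_r\mapsto g(x_1,\dots,x_n)$ is the translate by $a_r$ of the integrable function $x_r\mapsto f(x_1+a_1,\dots,x_{r-1}+a_{r-1},x_r,x_{r+1}+a_{r+1},\dots,x_n+a_n)$, hence integrable, and by translation invariance of $\int^F$ its integral equals $h(x_1+a_1,\dots,\no{x}_r,\dots,x_n+a_n)$, where $h(x_1,\dots,\no{x}_r,\dots,x_n)=\int^F f(x_1,\dots,x_n)\,dx_r$. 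Since $f$ is Fubini, $h$ is Fubini on $F^{n-1}$, so the inductive hypothesis applies to its translate by $(a_1,\dots,\no{a}_r,\dots,a_n)$ --- which is precisely the function appearing in condition (i) --- giving that the latter is Fubini on $F^{n-1}$ with repeated integral $\int^{F^{n-1}}h=\int^{F^n}f$. This common value is visibly independent of $r$, so condition (ii) holds, and $g$ is Fubini with $\int^{F^n}g=\int^{F^n}f$.

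For the scaling statement I would run the same induction with $g(x)=f(\al x)$, $\al\in{\mult{F}}^n$: now $x_r\mapsto g(x_1,\dots,x_n)$ is a scaling by $\al_r$ of an integrable function, hence integrable, with $\int^F g\,dx_r=\abs{\al_r}^{-1}h(\al_1x_1,\dots,\no{x}_r,\dots,\al_nx_n)$ for the same $h$. By the inductive hypothesis the scaling of $h$ by $(\al_1,\dots,\no{\al}_r,\dots,\al_n)$ is Fubini with repeated integral $\big(\prod_{i\ne r}\abs{\al_i}^{-1}\big)\int^{F^n}f$, and multiplying by the $\CG$-scalar $\abs{\al_r}^{-1}$ preserves the Fubini property (the repeated integral being $\CG$-linear) and gives repeated integral $\abs{\al}^{-1}\int^{F^n}f$, again independent of $r$. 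The strongly Fubini assertions then follow from the identical inductions with ``integrable'' replaced by ``simple'' throughout: the translate or scaling of a simple function is simple, and the $(n-1)$-variable function produced in condition (i) is strongly Fubini by the inductive hypothesis.

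I expect no genuine obstacle; the argument is essentially bookkeeping. The one step needing care is condition (i) of the inductive step, where after integrating out $x_r$ one must recognise the resulting function of the remaining $n-1$ variables as a genuine translate (resp. scaling) of the Fubini function $h$ attached to $f$, so that the inductive hypothesis applies --- this is where translation invariance (resp. the scaling law) of $\int^F$ is used, to absorb the component $a_r$ of the translation (resp. to peel $\abs{\al_r}^{-1}$ off as an outer $\CG$-coefficient), and where one checks that $\big(\prod_{i\ne r}\abs{\al_i}^{-1}\big)\abs{\al_r}^{-1}=\abs{\al}^{-1}$ independently of $r$.
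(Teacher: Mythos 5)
Your proof is correct and follows exactly the route the paper intends: the paper's proof is just the one-line remark that the result is ``a simple induction on $n$, the case $n=1$ being remark \ref{remark_translation_and_scaling_of_integrable_functions}'', and your argument is a careful writing-out of that induction, using translation invariance and the scaling law of $\int^F$ (theorem \ref{theorem_main_properties_of_integral}) to identify the partial integrals as translates, respectively monomial multiples of scalings, of the $(n-1)$-variable Fubini (or strongly Fubini) function attached to $f$. No discrepancy with the paper's approach.
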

\begin{proof}
This is a simple induction on $n$; the case $n=1$ is remark \ref{remark_translation_and_scaling_of_integrable_functions}.
\end{proof}

A theme of this paper is showing how integrals constructed at the level of $F$ lift Haar integrals on $\res{F}$. For the integral on $F$, this is the identity \[\int^F g^0(x)\,dx=\int g(u)\,du\] for Haar integrable $g$ on $\res{F}$.

We will denote by $t:\G^n\to F^n$ the product of $n$ copies of $t$; the value of $n$ will be clear from the context. Similarly, we write $\rho$ or an overline for the the residue map $\roi{F}^n\to\res{F}^n$. Given $a=(a_1,\dots,a_n)\in F^n$ and $\g=(\g_1,\dots,\g_n)\in\G$, there is a product of translated fractional ideals given by \[a+t(\g)\roi{F}^n=\prod_{i=1}^n a_i+t(\g_i)\roi{F}.\]

Now we may generalise the notion of lifting a function:

\begin{definition}
For $g$ a function on $\Res{F}^n$ taking values in an abelian group $A$, set
\begin{align*}
	g^0:F^n&\to A\\
	x&\mapsto \begin{cases}
		g(\Res{x}) & x\in\roi{F}^n \\
		0 & \mbox{otherwise.}
		\end{cases}
\end{align*}

Again, more generally, for $a\in F^n$, $\g\in\Gamma^n$, the \emph{lift of $g$ at $a,\g$} is the $A$-valued function on $F$ defined by
\[g^{a,\g}(x)=
    \begin{cases}
    g(\Res{(x-a)t(-\g)}) & \quad x\in a+t(\g)\roi{F}^n\\
    0 & \quad\mbox{otherwise}
    \end{cases}
\]

Of course, $g^0=g^{0,0}$ and $g^{a,\g}(a+t(\g)x)=g^{0,0}(x)$ for all $x\in F^n$
\end{definition}

\begin{remark}\label{remark_section_of_lifted_function}
It is a straightforward observation that a section of a lifted function is again a lifted function. To be precise, suppose that $f=g^{a,\g}$ is a lifted function as in the definition, $r$ is such that $1\le r\le n$, and $x_1,\dots,\no{x}_r,\dots,x_n\in F$. Then the function \[x_r\mapsto f(x_1,\dots,x_n)\] of $F$ is identically zero unless $x_i\in a_i+t(\g_i)\roi{F}$ for all $i\neq r$.

If in fact $x_i\in a_i+t(\g_i)\roi{F}$ for all $i\neq r$, then \[x_r\mapsto f(x_1,\dots,x_n)\] is the lift of \[u_r\mapsto g(\res{\xi}_1,\dots,\res{\xi}_{r-1}, u_r,\res{\xi}_{r+1},\dots,\res{\xi}_n)\] at $a_r,\g_r$, where $\xi_i:=(x_i-a_i)t(-\g_i)\in\roi{F}$ for $i\neq r$.

This generalises to $s$-dimensional sections of $f$ for any $s$ with $1\le s\le n$. We shall frequently employ the cases $s=1$ and $s=2$.
\end{remark}

We may now prove the fundamental result that the repeated integral on $F^n$ lifts the Haar integral on $\res{F}^n$:

\begin{proposition}\label{proposition_integral_of_lifted_function}
Suppose $g$ is a Fubini function on $F^n$. Then $g^0$ is strongly Fubini on $F^n$, with repeated integral \[\int^{F^n}f(x)\,dx=\int_{\res{F}^n} g(u)\,du\,X^{\sum_{i=1}^n \g_i}.\]
\end{proposition}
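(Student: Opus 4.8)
The plan is to prove the statement by induction on $n$, treating from the start an arbitrary lift $g^{a,\g}$ rather than only $g^0$: this is what the exponent $X^{\sum_i\g_i}$ refers to, and it is also what makes the induction close up, since one-dimensional sections of $g^0$ are lifts at shifted (generally nonzero) parameters. The base case $n=1$ is immediate: if $g$ is Haar integrable on $\res{F}$ then $g^{a,\g}$ is by definition a simple function, hence strongly Fubini, and remark \ref{remark_repeated_integral_of_lifted_function} gives $\int^F g^{a,\g}(x)\,dx=\int g(u)\,du\,X^{\g}$.

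For the inductive step, fix $n>1$, assume the proposition on $F^{n-1}$, let $g$ be Fubini on $\res{F}^n$ with lift $g^{a,\g}$ on $F^n$, and fix $r$. First I would check the two clauses in the definition of strongly Fubini. By remark \ref{remark_section_of_lifted_function}, the section $x_r\mapsto g^{a,\g}(x_1,\dots,x_n)$ is either identically zero, hence simple, or the lift at $a_r,\g_r$ of the function $u_r\mapsto g(\res{\xi}_1,\dots,u_r,\dots,\res{\xi}_n)$ with $\xi_i=(x_i-a_i)t(-\g_i)$; the latter function is Haar integrable because $g$ is Fubini on $\res{F}^n$, so its lift is again simple. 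Next, applying remark \ref{remark_repeated_integral_of_lifted_function} to the non-vanishing sections (and noting the vanishing case is trivial) gives
\[\int^F g^{a,\g}(x_1,\dots,x_n)\,dx_r=X^{\g_r}\,h^{a',\g'}(x_1,\dots,\no{x}_r,\dots,x_n),\]
where $h(v_1,\dots,\no{v}_r,\dots,v_n):=\int g(v_1,\dots,u_r,\dots,v_n)\,du_r$ and $a',\g'$ are $a,\g$ with the $r$th coordinate removed. By the definition of $g$ being Fubini on $\res{F}^n$, the function $h$ is Fubini on $\res{F}^{n-1}$, so the inductive hypothesis shows $h^{a',\g'}$ is strongly Fubini on $F^{n-1}$ with repeated integral $\int_{\res{F}^{n-1}}h(v)\,dv\,X^{\sum_{i\neq r}\g_i}$; since multiplication by the monomial $X^{\g_r}$ preserves simple, hence strongly Fubini, functions, and $\int^{F^{n-1}}$ is $\CG$-linear, the partial-integral function is strongly Fubini on $F^{n-1}$ with repeated integral $\int_{\res{F}^{n-1}}h(v)\,dv\,X^{\sum_i\g_i}$. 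Finally, by Fubini's theorem for Haar integrable functions (remark \ref{remarks_on_Fubini_functions}), $\int_{\res{F}^{n-1}}h(v)\,dv=\int_{\res{F}^n}g(u)\,du$, which does not depend on $r$; this verifies that the $n$ repeated integrals on $F^{n-1}$ agree, so $g^{a,\g}$ is strongly Fubini on $F^n$ with $\int^{F^n}g^{a,\g}(x)\,dx=\int_{\res{F}^n}g(u)\,du\,X^{\sum_i\g_i}$, as required.

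The only delicate point — and hence the main obstacle — is the bookkeeping in the inductive step: one must use remark \ref{remark_section_of_lifted_function} to see that sections of a lift are again lifts at shifted parameters, and then recognise that the partial integral of $g^{a,\g}$ in the variable $x_r$ is, up to the monomial factor $X^{\g_r}$, precisely the lift of the function $h$ on $\res{F}^{n-1}$ that the hypothesis "$g$ is Fubini on $\res{F}^n$" already hands us. Once that identification is made cleanly and the (trivial) vanishing cases are disposed of, the conclusion is forced by the inductive hypothesis, the $\CG$-linearity of the repeated integral, and Fubini's theorem on $\res{F}^n$.
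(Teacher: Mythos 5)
Your proof is correct and follows essentially the same route as the paper: induction on $n$, using remark \ref{remark_section_of_lifted_function} to see that one-dimensional sections of a lifted function are again lifts, identifying the partial integral in $x_r$ with the lift of the Haar partial integral of $g$, and invoking Fubini's theorem on $\res{F}^n$ (remark \ref{remarks_on_Fubini_functions}) for independence of $r$. The only difference is cosmetic: you carry the parameters $a,\g$ through the induction, thereby proving remark \ref{remark_integral_of_lifted_function} directly, whereas the paper treats the case $a=0$, $\g=0$ and deduces the general lift afterwards from lemma \ref{lemma_translation_and_scaling_of_Fubini_functions}.
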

\begin{proof}
Let $r$ be such that $1\le r\le n$, and fix $x_1,\dots,\no{x}_r,\dots,x_n\in F$. The previous remark and the case $n=1$ (contained in theorem \ref{theorem_main_properties_of_integral}) imply that $x_r\mapsto g^0(x_1,\dots,x_n)$ is simple and integrable on $F$ with integral
\[\begin{cases}
	\int^F g(\res{x}_1,\dots,\res{x}_{r-1},u_r,\res{x}_{r+1},\dots,\res{x}_n)\,dx_r & x_i\in\roi{F}\mbox{ for all }i\neq r\\
	0 & \mbox{otherwise}.
	\end{cases}\]
That is, \[(x_1,\dots,\no{x}_r,\dots,x_n)\mapsto\int^F g^0(x_1,\dots,x_n)\,dx_r\] is the lift of the everywhere defined Haar integrable function \[(u_1,\dots,u_n)\mapsto\int g(u_1,\dots,u_n)\,du_r\] on $F^{n-1}$.

The result now follows easily by induction on $n$.
\end{proof}

\begin{remark}\label{remark_integral_of_lifted_function}
More generally, suppose $f=g^{a,\g}$ is the lift of a Fubini function to $F^n$; here $g$ is Fubini on $\res{F}^n$, $a\in F$ and $\g\in\Gamma$. Then the proposition, and the invariance of being strongly Fubini under translation and scaling (lemma \ref{lemma_translation_and_scaling_of_Fubini_functions}) imply $f$ is strongly Fubini on $F^n$, with repeated integral \[\int^{F^n}f(x)\,dx=\int_{\res{F}^n} g(u)\,du\,X^{\sum_{i=1}^n \g_i}.\]
\end{remark}
%
%

\section{Change of variables from $\mbox{GL}_n$ in repeated integrals}
With the basics of repeated integrals in place, we turn to the interaction of the theory with $\GL{n}{F}$. We shall write the action of $\GL{n}{F}$ on $F^n$ as a left action, though we also write elements of $F^n$ as row vectors; given $\tau\in\GL{n}{F}$ and $x=(x_1,\dots,x_n)\in F^n$, $\tau x$ means
\[\tau x = \tau \left(\begin{array}{c}x_1\\ \vdots\\ x_n\end{array}\right).\]
Given a function $f$ on $F^n$, we write $f\circ\tau$ for the function $x\mapsto f(\tau x)$. $\SL{n}{F}$ denotes the determinant $1$ subgroup of $\GL{n}{F}$. These notation also apply to $\res{F}$ in place of $F$.

\begin{definition}
A complex-valued function $f$ on $\res{F}^n$ is said to be \emph{GL-Fubini} if and only if $f\circ\tau$ is Fubini for all $\tau\in\GL{n}{\res{F}}$.
\end{definition}

\begin{remark} \label{remark_on_GL_Fubini_functions}
Any continuous complex-valued function with compact support is GL-Fubini, as is any Schwartz function when $\res{F}$ is archimedean; this follows from remark \ref{remarks_on_Fubini_functions} and the invariance of these properties under $\GL{n}{\res{F}}$. In the following results this is the sort of function to have in mind.
\end{remark}

\begin{definition}
Let $\calL(F^n, \mbox{GL}_n)$ be the $\CG$ space of $\CG$-valued functions spanned by $g^{a,\g}\circ\tau$ for $g$ GL-Fubini, $\tau\in\GL{n}{F}$, $a\in F^n$, $\g\in\Gamma^n$.
\end{definition}

The aim of this section is the following result:

\begin{theorem}\label{theorem_main_result}
Every function in $\calL(F^n, \mbox{GL}_n)$ is Fubini on $F^n$. If $f\in\calL(F^n, \mbox{GL}_n)$, $a\in F^n$, and $\tau\in\GL{n}{F}$, then the functions $x\mapsto f(x+a)$ and $x\mapsto f(\tau x)$ belong to $\calL(F^n, \mbox{GL}_n)$, with repeated integrals given by
\[\int^{F^n}f(x+a)\,dx=\int^{F^n}f(x)\,dx,\quad\quad
    \int^{F^n}f(\tau x)\,dx=\abs{\det\tau}^{-1}\int^{F^n}f(x)\,dx\]
\end{theorem}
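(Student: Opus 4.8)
The plan is to reduce everything to the single key identity $(\ast)$ for lifts of GL-Fubini functions, and then assemble the general statement by $\CG$-linearity and an orbit-counting argument. First I would establish the core lemma: if $g$ is GL-Fubini on $\res{F}^n$, $a\in F^n$, $\g\in\Gamma^n$, and $\tau\in\GL{n}{F}$, then $(g^{a,\g}\circ\tau)$ is Fubini on $F^n$ and
\[
\int^{F^n}(g^{a,\g}\circ\tau)(x)\,dx \;=\; \abs{\det\tau}^{-1}\int_{\res{F}^n}g(u)\,du\;X^{\sum_i\g_i}.
\]
To prove this, write $\tau x = a$-translate plus linear part and use Lemma~\ref{lemma_translation_and_scaling_of_Fubini_functions} to absorb the translation by $a$ and by $\tau^{-1}a$, so it suffices to treat $g^{0,\g}\circ\tau$. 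The natural approach is to factor $\tau$ through the Bruhat-type decomposition or, more elementarily, as a product of elementary matrices, a permutation matrix, and a diagonal matrix; each such factor is handled by a concrete computation. Diagonal matrices are exactly Lemma~\ref{lemma_translation_and_scaling_of_Fubini_functions}. Permutation matrices merely reorder the coordinates, and since $g$ is GL-Fubini the reordered function is still Fubini with the same repeated integral (by the permutation-independence built into Definition~\ref{definition_Fubini}). The genuine content is in the elementary (transvection) matrices $I + cE_{ij}$.

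For an elementary matrix $\tau = I + cE_{ij}$ with $c\in F$, the substitution replaces $x_i$ by $x_i + cx_j$ and fixes the other coordinates. Here I would integrate first in the $x_i$ variable: for fixed values of the remaining coordinates, Remark~\ref{remark_section_of_lifted_function} identifies the one-dimensional section $x_i\mapsto g^{0,\g}(x+cx_j e_i\text{-shift})$ as a \emph{translate} of a lift of a section of $g$, so by translation invariance (Theorem~\ref{theorem_main_properties_of_integral}(ii)) the inner integral is unchanged. This collapses the $x_i$-integration to exactly what it would be without the transvection, and an induction on $n$ applied to the remaining $(n-1)$-dimensional integrand — which is again (a lift of) a Fubini function on $\res{F}^{n-1}$ — finishes the case. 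The compatibility of this coordinate order with the Fubini property is where one must be slightly careful: one needs that \emph{every} order of integration gives the same answer, so in fact one argues that $g^{0,\g}\circ\tau$ satisfies both clauses of Definition~\ref{definition_Fubini}, using that $g$ being GL-Fubini guarantees the relevant lower-dimensional sections remain GL-Fubini (again via Remark~\ref{remark_section_of_lifted_function}, since a linear section of a GL-Fubini function is GL-Fubini).

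Once the core lemma holds for all generators $g^{a,\g}\circ\tau$ of $\calL(F^n,\mbox{GL}_n)$, the full theorem follows formally. Given $f = \sum_k \lambda_k\, g_k^{a_k,\g_k}\circ\tau_k \in \calL(F^n,\mbox{GL}_n)$ with $\lambda_k\in\CG$, linearity of the repeated integral gives that $f$ is Fubini. For the change of variable $x\mapsto f(\sigma x)$ with $\sigma\in\GL{n}{F}$: each summand becomes $\lambda_k\, g_k^{a_k,\g_k}\circ(\tau_k\sigma)$, which is again a generator, so $f\circ\sigma\in\calL(F^n,\mbox{GL}_n)$; applying the core lemma to each term and using $\abs{\det(\tau_k\sigma)} = \abs{\det\tau_k}\abs{\det\sigma}$ — a short computation from the definition $\abs{\alpha}=\abs{\alpha t(-\nu(\alpha))}X^{\nu(\alpha)}$ and multiplicativity of both $\nu$ and the residue absolute value — yields $\int^{F^n}f(\sigma x)\,dx = \abs{\det\sigma}^{-1}\int^{F^n}f(x)\,dx$. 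Affine translations $x\mapsto f(x+a)$ are handled the same way, replacing $g_k^{a_k,\g_k}\circ\tau_k$ by its translate, which is a lift of the same $g_k$ at a shifted point composed with $\tau_k$, hence still a generator, with repeated integral unchanged by Lemma~\ref{lemma_translation_and_scaling_of_Fubini_functions}.

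The main obstacle I anticipate is the elementary-matrix case of the core lemma — specifically, verifying that $g^{0,\g}\circ(I+cE_{ij})$ is Fubini in the strong sense of Definition~\ref{definition_Fubini} (all $n!$ orders agree), not merely that one convenient order evaluates correctly. The subtlety is that after the substitution $x_i\mapsto x_i+cx_j$ the integrand is no longer a lift of a function on $\res{F}^n$, so Proposition~\ref{proposition_integral_of_lifted_function} does not apply directly; one must instead track, order by order, that each partial section is still an integrable (indeed simple) function on $F$ and that the partially-integrated function lands back among lifts of GL-Fubini functions on the lower-dimensional space. Organizing this bookkeeping cleanly — perhaps by proving a slightly stronger inductive statement asserting that $g^{a,\g}\circ\tau$ is always a $\CG$-combination of strongly Fubini functions whose repeated integral is given by the displayed formula — is the crux of the argument.
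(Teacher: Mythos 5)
Your overall strategy (reduce to the generators $g^{a,\g}\circ\tau$, factor $\tau$, dispose of diagonal and permutation factors via Lemma \ref{lemma_translation_and_scaling_of_Fubini_functions} and the symmetry built into Definition \ref{definition_Fubini}, and put the real work into transvections) is reasonable and runs parallel to the paper, which instead factors $\tau=AU\Lambda$ by the Iwasawa decomposition (Lemma \ref{lemma_Iwasawa_decomposition}), absorbs $A\in\GL{n}{\roi{F}}$ into the function via $g^0\circ A=(g\circ\res{A})^0$, and treats the unipotent part by induction. But there is a genuine gap at exactly the point you flag: the transvection case is not proved, and it is the mathematical heart of the theorem. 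For $f=g^{0,\g}$ and the substitution $x_i\mapsto x_i+cx_j$, the problematic orders of integration are those in which $x_j$ is integrated before $x_i$: the section $x_j\mapsto f(\dots,x_i+cx_j,\dots,x_j,\dots)$ has the integration variable in two slots, and whether it is integrable, what its support is, and what its integral equals all depend on $\nu(c)$ (relative to $\g$). The paper's Lemma \ref{Fubini_statement} carries out this computation: the section is the lift, at a point and index determined by $\nu(c)$, of $v\mapsto g\circ\tau(\,\cdot\,,v)$ for an explicit $\tau\in\SL{2}{\res{F}}$ which is in general not the identity (for instance $\left(\begin{smallmatrix}0&1\\-1&0\end{smallmatrix}\right)$ when the relevant valuation is negative). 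This is precisely why the hypothesis on $g$ must be GL-Fubini rather than merely Fubini, and without this valuation case analysis you cannot verify clause (i) of Definition \ref{definition_Fubini} for the bad orders, nor that their repeated integrals agree with the easy order. Translation invariance, as in your sketch, only handles the order in which $x_i$ is integrated first.

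Second, your induction scheme of handling one elementary matrix at a time does not close up: after composing a lift with a single transvection the function is no longer a lift (nor obviously a $\CG$-combination of lifts), so the same one-transvection step cannot simply be applied to the next elementary factor, and integrating out one variable does not decouple the remaining transvections when they share variables. The paper resolves this with a stronger inductive apparatus: the technical lemma preceding Proposition \ref{prop_unipotent_is_Fubini} treats a matrix differing from the identity only along one column, showing the partial integral in the last variable is again a lift of a partial integral of $g\circ\tau\tau'$ with $\tau,\tau'$ in $\mbox{SL}$ over $\res{F}$, and Proposition \ref{prop_unipotent_is_Fubini} then handles a general u.u.t.\ matrix $U$ via the factorization $U=PV$, integrating out the $r$-th variable through $P$ and applying the inductive hypothesis to $V'$ acting on the partially integrated (again lifted) function; Proposition \ref{prop_GL_is_Fubini} assembles the general case from the Iwasawa factors. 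Your closing suggestion of proving a "slightly stronger inductive statement" points in the right direction, but as written the proposal defers exactly the computations that constitute the proof.
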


The theorem will be proved through several smaller results. First we recall the Iwasawa decomposition, where we abbreviate "unipotent upper triangular" to u.u.t.

\begin{lemma} \label{lemma_Iwasawa_decomposition}
Let $\tau$ be in $\GL{n}{F}$. Then there exist $A$ in $\GL{n}{\roi{F}}$, a u.u.t. $U$ in $\GL{n}{F}$, and a diagonal $\Lambda$ in $\GL{n}{F}$ such that $\tau=AU\Lambda$.
\end{lemma}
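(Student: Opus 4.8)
The plan is to first prove that $\GL{n}{F}=\GL{n}{\roi{F}}\cdot B$, where $B\leq\GL{n}{F}$ denotes the subgroup of invertible upper triangular matrices. Granting this, any $\tau\in\GL{n}{F}$ may be written $\tau=AT$ with $A\in\GL{n}{\roi{F}}$ and $T\in B$, and one then splits $T=U\Lambda$ by taking $\Lambda=\mathrm{diag}(T_{11},\dots,T_{nn})$ and $U=T\Lambda^{-1}$, which is unipotent upper triangular; this last factorisation is elementary linear algebra over any commutative ring. So the whole lemma reduces to a Gaussian elimination argument using only row operations with coefficients in $\roi{F}$, i.e.\ left multiplication by matrices of $\GL{n}{\roi{F}}$.

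I would establish $\GL{n}{F}=\GL{n}{\roi{F}}\cdot B$ by induction on $n$, the case $n=1$ being trivial. Let $\tau\in\GL{n}{F}$. Its first column is nonzero, and since $\G$ is totally ordered the finitely many valuations of its nonzero entries have a least element; choose $i$ with $\tau_{i1}\neq0$ of least valuation, so that $\tau_{j1}\tau_{i1}^{-1}\in\roi{F}$ for every $j$. Left multiplying $\tau$ first by the permutation matrix interchanging rows $1$ and $i$, and then by the elementary matrices (with coefficients in $\roi{F}$) that use the new first row to clear the remaining entries of the first column — each such matrix, and its inverse, lying in $\GL{n}{\roi{F}}$ — produces a matrix
\[A_0^{-1}\tau=\begin{pmatrix}c&v\\0&\tau'\end{pmatrix}\]
with $A_0\in\GL{n}{\roi{F}}$, $c\in\mult{F}$, $v\in F^{n-1}$, and $\tau'\in\GL{n-1}{F}$ (invertibility of the block matrix forces $\tau'$ to be invertible). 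By the inductive hypothesis $\tau'=A'T'$ with $A'\in\GL{n-1}{\roi{F}}$ and $T'$ invertible upper triangular, whence
\[\tau=A_0\begin{pmatrix}1&0\\0&A'\end{pmatrix}\begin{pmatrix}c&v\\0&T'\end{pmatrix}.\]
The first two factors multiply to an element of $\GL{n}{\roi{F}}$ and the third lies in $B$, which completes the induction and hence, via the first paragraph, the proof.

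There is no deep obstacle here. The one point needing a little care — and the only place the general valued-field hypothesis genuinely intervenes — is the choice of pivot: in the classical local field case one selects the column entry of largest absolute value, whereas here one uses instead that $\G$ is totally ordered, so that the valuations occurring in the column have a minimum and the corresponding entry divides all the others in $\roi{F}$. The remaining verifications (that the permutation and elementary matrices used, together with their inverses, lie in $\GL{n}{\roi{F}}$, and that the displayed block matrix is invertible exactly when its lower-right block is) are routine.
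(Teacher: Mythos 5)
Your argument is correct, and it is essentially the proof the paper has in mind: the paper simply cites the standard Iwasawa decomposition argument (e.g.\ Bump) and remarks that it goes through for a general split valuation field, which is exactly the column-reduction you carry out, with the pivot of minimal valuation (available since $\G$ is totally ordered) replacing the entry of maximal absolute value and with no completeness needed. So the proposal matches the paper's approach, merely writing out the details the paper leaves to the reference.
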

\begin{proof}
When $\Gamma=\mathbb{Z}$ and $F$ is complete with respect to the discrete valuation $\nu$, this is the standard Iwasawa decomposition. However, the standard proof is valid in the generality in which we require it (see eg. \cite{Bump}).
\end{proof}

This decomposition allows us to restrict attention to a upper triangular matrices, for the $\GL{n}{\roi{F}}$ term can be "absorbed" into the function:

\begin{lemma}\label{lemma_corollary_to_Iwasawa_decomposition}
$\calL(F^n, \mbox{GL}_n)$ is spanned over $\CG$ by functions of the form $x\mapsto g^0\circ U(\al x+a)$, for $g$ GL-Fubini on $\res{F}^n$, $U$ a u.u.t. matrix, $\al\in{\mult{F}}^n$, and $a\in F^n$.
\end{lemma}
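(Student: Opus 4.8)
The plan is to start from the spanning set $\{g^{a,\g}\circ\tau\}$ given in the definition of $\calL(F^n,\mbox{GL}_n)$ and massage each generator into the asserted form, using the Iwasawa decomposition (Lemma \ref{lemma_Iwasawa_decomposition}) to handle $\tau$ and absorbing the $\GL{n}{\roi{F}}$-part into the GL-Fubini function. First I would reduce the lift $g^{a,\g}$ to a scalar multiple of a composition of $g^{0}$ with an affine map. Indeed, by the identity $g^{a,\g}(a+t(\g)x)=g^{0,0}(x)$ recorded after the definition of the lift on $F^n$, one has $g^{a,\g}(y)=g^{0}\bigl(t(-\g)(y-a)\bigr)=g^{0}\bigl(t(-\g)y - t(-\g)a\bigr)$; writing $\beta=t(-\g)\in{\mult F}^n$ (coordinatewise) and $b=-t(-\g)a\in F^n$, this says $g^{a,\g}=g^{0}\circ(y\mapsto \beta y+b)$, a scaling-and-translation of $g^{0}$. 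Thus the typical generator is $x\mapsto g^{0}(\beta(\tau x)+b)$ for some $\tau\in\GL{n}{F}$.

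Next I would apply the Iwasawa decomposition $\tau=AU\Lambda$ with $A\in\GL{n}{\roi F}$, $U$ u.u.t., and $\Lambda$ diagonal in $\GL{n}{F}$. The diagonal matrix $\Lambda=\mathrm{diag}(\lambda_1,\dots,\lambda_n)$ acts on a row/column vector by coordinatewise scaling, so $\Lambda x=\mu x$ for $\mu=(\lambda_1,\dots,\lambda_n)\in{\mult F}^n$; hence $\beta(\tau x)+b=\beta\bigl(AU(\mu x)\bigr)+b$. To eliminate the remaining non-triangular factor $A$, observe that $A\in\GL{n}{\roi F}$ preserves $\roi F^{n}$ and descends to $\res A\in\GL{n}{\res F}$; define $h:=g^{0}\circ(y\mapsto\beta y+b)$ on $F^{n}$, which by the first paragraph is just the lift $g^{a,\g}$, and note that the function $y\mapsto h(Ay)$ is again of lift type. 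More directly, I would set $\tilde g:=g\circ(u\mapsto \res\beta\,\res A\,u+\res b)$ — here $\res\beta\in{\mult{\res F}}^n$ makes sense only if each coordinate of $\beta$ is a unit, which need not hold; so instead I would keep the scaling $\beta$ outside and work with $\tilde g:=g\circ\res A$, which is GL-Fubini since $g$ is GL-Fubini and GL-Fubini-ness is manifestly $\GL{n}{\res F}$-invariant. One then checks that $g^{0}\circ(y\mapsto\beta(AUy)+b)$ equals $\tilde g^{0}\circ(y\mapsto\beta(Uy)+b)$ as functions on $F^{n}$: on the set where $AUy$-argument lands in $\roi F^{n}$ both sides read off $g$ resp.\ $\tilde g$ of the appropriate residue, using $\res{A}\,\res{(Uy)}=\res{(AUy)}$ and that $A$ is a unit matrix so it does not affect which points map into $\roi F^{n}$ after the common scaling $\beta$; off that set both vanish.

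Finally I would absorb $\beta$ and $b$: the composite map $y\mapsto \beta(Uy)+b$ can be rewritten as $y\mapsto U'(\al y)+a'$ for a u.u.t.\ matrix $U'$, a vector $\al\in{\mult F}^{n}$, and $a'\in F^{n}$, because conjugating a u.u.t.\ matrix by a diagonal matrix is again u.u.t.\ and an affine shift commutes appropriately; concretely $\beta U y = (\Delta_\beta U \Delta_\beta^{-1})(\beta y)$ where $\Delta_\beta=\mathrm{diag}(\beta_1,\dots,\beta_n)$, and $\Delta_\beta U\Delta_\beta^{-1}$ is u.u.t.\ with the same diagonal of $1$'s. This yields $\tilde g^{0}\circ U'(\al x + a')$, matching the stated form after renaming. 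The main obstacle I anticipate is the bookkeeping in the second paragraph: verifying carefully that scaling by a non-unit $\beta$ followed by $A\in\GL{n}{\roi F}$ interacts correctly with the "$\in\roi F^{n}$ or zero" case distinction in the definition of $g^{0}$, i.e.\ that one may legitimately pull $A$ inside as $\res A$ acting on $g$ without the scaling $\beta$ spoiling the integrality condition — this is where the hypothesis $A\in\GL{n}{\roi F}$ (not merely $\GL{n}{F}$) is essential, and it should be isolated as the one genuine verification, the rest being formal manipulation of affine maps and the triangular-times-diagonal normal form.
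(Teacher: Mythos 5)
Your reduction of $g^{a,\g}$ to $g^0\circ(y\mapsto\beta y+b)$ with $\beta=t(-\g)$, $b=-t(-\g)a$, and your final observation that $\beta\, Uy=(\mathrm{diag}(\beta)\,U\,\mathrm{diag}(\beta)^{-1})(\beta y)$ with the conjugate again u.u.t., are both fine. The gap is exactly at the step you flag as ``the one genuine verification'': the claimed identity $g^{0}\bigl(\mathrm{diag}(\beta)AUy+b\bigr)=(g\circ\res{A})^{0}\bigl(\mathrm{diag}(\beta)Uy+b\bigr)$ is false in general, because $\mathrm{diag}(\beta)$ and $A$ do not commute and the integrality condition in the definition of $g^0$ is not preserved. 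What $A\in\GL{n}{\roi{F}}$ preserves is $\roi{F}^n$, not the set $\mathrm{diag}(\beta)^{-1}(\roi{F}^n-b)$, which is a product of translated fractional ideals of different ``radii'' when the coordinates of $\beta$ have different valuations. Concretely, take $n=2$, $b=0$, $\beta=(1,t(\g))$ with $\g>0$, $A=\left(\begin{smallmatrix}1&1\\0&1\end{smallmatrix}\right)\in\GL{2}{\roi{F}}$, and $z=(-t(-\g'),t(-\g'))$ with $0<\g'\le\g$: then $\mathrm{diag}(\beta)Az=(0,t(\g-\g'))\in\roi{F}^2$ while $\mathrm{diag}(\beta)z\notin\roi{F}^2$, so the two sides have different supports; and even on the common support the residues disagree, since the left side evaluates $g$ at $(\res{z}_1+\res{z}_2,\res{t(\g)z_2})$ whereas the right side evaluates it at $(\res{z}_1+\res{t(\g)z_2},\res{t(\g)z_2})$, and $\res{z}_2\neq\res{t(\g)z_2}$ (the latter is $0$). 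So $A$ cannot be pulled through the non-unit scaling $\beta$.

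The paper's proof avoids this by a change in the order of operations: it applies the Iwasawa decomposition not to $\tau$ but to the combined matrix $\mathrm{diag}(t(-\g_1),\dots,t(-\g_n))\,\tau=AU\Lambda$. Then $g^{a,\g}(\tau x)=g^{0}\bigl(AU\Lambda(x-\tau^{-1}a)\bigr)$ has $A$ as the \emph{outermost} factor in the argument of $g^{0}$, so the absorption $g^{0}\circ A=(g\circ\res{A})^{0}$ is legitimate, and writing $\Lambda=\mathrm{diag}(\lambda_1,\dots,\lambda_n)$ gives $(g\circ\res{A})^{0}\bigl(U(\lambda x+b)\bigr)$ with $b=-\lambda\tau^{-1}a$, which is the required form since $g\circ\res{A}$ is again GL-Fubini. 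Your argument is repaired the same way: after writing the generator as $x\mapsto g^{0}\bigl(\mathrm{diag}(\beta)\tau x+b\bigr)$, decompose $\mathrm{diag}(\beta)\tau$ (not $\tau$) by Lemma \ref{lemma_Iwasawa_decomposition}; the remaining manipulations in your proposal then go through unchanged.
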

\begin{proof}
Let $g$ be GL-Fubini on $\res{F}^n$; let $a\in F^n$, $\g\in\Gamma^n$. Let $A,U,\Lambda$ be the decomposition of \[\left(\begin{array}{ccc}t(-\g_1)&&\\&\ddots&\\&&t(-\g_n)\end{array}\right)U\] as in lemma \ref{lemma_Iwasawa_decomposition}. For $x$ in $F$, the identity $g^{a,\g}(x)=g^0\circ AU\Lambda(x-\tau^{-1}a)$ holds.

Now note that $g^0\circ A=(g\circ\res{A})^0$ where $\res{A}$ is the image of $A$ in $\GL{n}{\res{F}}$. So $x\in F$ implies $g^{a,\g}(x)=(g\circ \res{A})^0( U(\lambda x+b))$, where $\lambda\in{\mult{F}}^n$ is defined by \[\Lambda=\left(\begin{array}{ccc}\lambda_1&&\\&\ddots&\\&&\lambda_n\end{array}\right),\] and $b=-\lambda\tau^{-1} a$.
\end{proof}

We now turn to proving special cases of the main theorem as well as some technical lemmas. Particular attention is given to the case $n=2$, for it is required several times later in inductions.

\begin{lemma} \label{Fubini_statement}
Let $g$ be GL-Fubini on $\res{F}^2$ and set $f=g^0$. Let $\al\in F$ and set $e=\al^{-1}t(\nu(\al))$ if $\al\neq0$ and $e=0$ otherwise; set $\delta_0=\min(\nu(\al),0)$.

There exists $\tau\in\SL{2}{\res{F}}$, independent of $g$, such that, for any $x\in F$, the function $y\mapsto f(x+\al y,y)$ equals
\[\begin{cases}
    \mbox{the lift of $v\mapsto g\circ\tau(\res{xt(-\delta_0)},v)$ at
    $-xet(-\delta_0),-\delta_0$} &\mbox{if } x\in t(\delta_0)\roi{F}\\
    0 & \mbox{otherwise.}
\end{cases}\]
\end{lemma}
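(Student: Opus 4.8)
The plan is to make explicit the change of variables $(x+\al y, y)$ and read off when it produces a lifted function. Write $z = x + \al y$ for the first coordinate, so that $y \mapsto f(x+\al y, y)$ is the restriction of $f = g^0$ along the line $\{(x+\al y, y) : y \in F\}$. By definition $f$ vanishes off $\roi F^2$, so $y\mapsto f(x+\al y,y)$ is identically zero unless both $y \in \roi F$ and $x + \al y \in \roi F$ for the relevant $y$. The idea is to separate the two cases $\al \neq 0$ and $\al = 0$, and in the first case to use the substitution normalising $\al$ via $e = \al^{-1} t(\nu(\al))$ (a unit) and $t(\nu(\al))$.

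First I would treat $\al \neq 0$. Here $\delta_0 = \min(\nu(\al),0)$, and the constraint $y \in \roi F$, $x + \al y \in \roi F$ should be rephrased: writing $y = t(-\delta_0) v$ appropriately (or rather recognising the range of admissible $y$ as a translated fractional ideal), one finds that $y\mapsto f(x+\al y,y)$ is non-zero only when $x \in t(\delta_0)\roi F$, and in that case the admissible $y$ form the coset $-xet(-\delta_0) + t(-\delta_0)\roi F$ — which is exactly the statement that the function is supported on, and is a lift at, $-xet(-\delta_0), -\delta_0$. On that coset, substituting and reducing modulo the maximal ideal, the pair $(\res{(x+\al y)\,\cdots}, \res{y\,\cdots})$ traces out $\res F^2$ as an $\res F$-linear (in fact $\SL_2$) image of $(\res{x t(-\delta_0)}, v)$; the matrix realising this is the promised $\tau \in \SL_2(\res F)$, and crucially it depends only on $\al$ (through $\nu(\al)$ and the unit $\res{e}$... more precisely only on the data $\al, \delta_0$), not on $g$ or $x$. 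That $f\circ(\text{this change of variables})$ is then literally $(g\circ\tau)^{\,a,\g}$ with $a = -xet(-\delta_0)$, $\g = -\delta_0$ is then a direct comparison of the two piecewise definitions, using that $g$ is GL-Fubini so $g\circ\tau$ is Fubini and the lift is well-defined.

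Then I would dispose of $\al = 0$: here $e = 0$, $\delta_0 = \min(0,0) = 0$, the condition "$x \in t(\delta_0)\roi F$" reads $x \in \roi F$, the claimed lift is at $0, 0$, and $\tau$ should be taken to be (a fixed matrix realising) the map $(v) \mapsto g(\res x, v)$ — i.e. $y\mapsto f(x,y) = g^0(x,y)$ which for $x\in\roi F$ is the lift of $v\mapsto g(\res x, v)$ at $0,0$, exactly by the definition of $g^0$ and Remark \ref{remark_section_of_lifted_function}. One must check the $\tau$ chosen here is consistent with (a limiting/degenerate case of) the one from $\al\neq 0$, or simply note the statement only asks for existence of some $\tau\in\SL_2(\res F)$ independent of $g$, so picking it separately in the two cases is legitimate.

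The main obstacle is bookkeeping: getting the coset description of the support exactly right — i.e. verifying that $\{y : y\in\roi F,\ x+\al y\in\roi F\}$ is nonempty iff $x\in t(\delta_0)\roi F$ and equals $-xet(-\delta_0)+t(-\delta_0)\roi F$ when it is — and then pinning down $\tau$ so that the residues match on the nose. This is where the split valuation hypothesis and the interplay of $\nu(\al)$, $\delta_0$, and the unit $e$ all get used; everything else is a mechanical unwinding of Remark \ref{remark_section_of_lifted_function} and the definition of the lift. I expect no conceptual difficulty beyond this, but the case $\nu(\al) \geq 0$ versus $\nu(\al) < 0$ may need to be handled with slightly different explicit formulas for $\tau$, both still lying in $\SL_2(\res F)$.
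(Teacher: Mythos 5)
Your proposal is correct and follows essentially the same route as the paper: a case analysis on the sign of $\nu(\al)$ (with $\al=0$ handled trivially via remark \ref{remark_section_of_lifted_function}), identification of the support as the coset $-xet(-\delta_0)+t(-\delta_0)\roi{F}$, and an explicit residue computation producing a matrix in $\SL{2}{\res{F}}$ depending only on the sign of $\nu(\al)$ and $\res{e}$. The only organizational difference is that the paper first normalises to $\al=t(\delta)$ and reinstates the unit $e$ afterwards through diagonal factors, whereas you track $e$ throughout; this is immaterial.
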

\begin{proof}
If $\al=0$ then we are just considering a section of a Fubini function and so $\tau=\mbox{id}$ suffices by remark \ref{remark_section_of_lifted_function}. Henceforth assume that $\al\neq0$.

We first consider the case $\al=t(\delta)$ for some $\delta\in\G$; so $e=1$. Consider, for any $x\in F$, the section
\begin{align*}
	D_x:F&\to\Comp\\
	y&\mapsto f(x+t(\delta)y,y).
	\end{align*}
We make the following claim, dependent on the sign of $\delta$, regarding $D_x$:

{\bf case: $\delta<0$.}
\[D_x=\begin{cases}
    \mbox{lift of $v\mapsto g(v,-\res{xt(-\delta}))$ at
    $-xt(-\delta),-\delta$} &\mbox{if } x\in t(\delta)\roi{F} \\
    0 & \mbox{otherwise.}
\end{cases}\]

{\bf case: $\delta=0$.}
\[D_x=\begin{cases}
    \mbox{lift of $v\mapsto g(v+\res{x},v)$ at
    $0,0$} &\mbox{if } x\in\roi{F} \\
    0 & \mbox{otherwise}
\end{cases}\]

{\bf case: $\delta>0$.}
\[D_x=\begin{cases}
    \mbox{lift of $v\mapsto g(\res{x},v)$ at
    $0,0$} &\mbox{if } x\in\roi{F} \\
    0 & \mbox{otherwise.}
\end{cases}\]

We shall prove the case $\delta=0$. For any $x,y\in F$, $f(x+y,y)$ vanishes unless $x+y$ and $y$ both belong to $\roi{F}$; hence $D_x$ is identically zero unless $x\in\roi{F}$. Assuming that $x\in\roi{F}$, it remains to verify that \[D_x=\mbox{lift of $v\mapsto g(v+\res{x},v)$ at $0,0$}.\] Both sides vanish off $\roi{F}$ and are seen to agree on $\roi{F}$ by direct evaluation. This proves the claim in this case. The other cases are proved by similar arguments and we omit the details.

If $\delta\ge 0$ and $x\in\roi{F}$, then $D_x$ is also the lift of a function at $-x,0$:

{\bf case: $\delta=0$.}
\[D_x=\mbox{lift of $v\mapsto g(v,v-\res{x})$ at $-x,0$}\]

{\bf case: $\delta>0$.}
\[D_x=\mbox{lift of $v\mapsto g(\res{x},v-\res{x})$ at $-x,0$}\]
The proof when $\al\in t(\Gamma)$ is completed by setting:

{\bf case: $\delta<0$.} \[\tau=\left(\begin{array}{cc}0&1\\-1&0\end{array}\right)\]

{\bf case: $\delta=0$.} \[\tau=\left(\begin{array}{cc}0&1\\-1&1\end{array}\right)\]

{\bf case: $\delta>0$.} \[\tau=\left(\begin{array}{cc}1&0\\-1&1\end{array}\right)\]

In the general case, write $\al=e^{-1}t(\delta)$, with $\delta=\nu(\al)$ and $e\in\mult{\roi{F}}$; let $\tau'=\left(\begin{array}{cc} \res{e}^{-1}&0\\0&1 \end{array}\right)$. Also introduce $f'(x,y)=f(e^{-1}x,y)$, which is the lift of $(u,v)\mapsto g(\res{e}^{-1}u,v)=g\circ\tau'(u,v)$ (a Fubini function on $\res{F}^2$) at $0,0$. By the case above, there exists $\tau\in\SL{2}{\res{F}}$ such that $x\in F$ implies $y\mapsto f'(x+t(\delta) y,y)=f(e^{-1}x+\al y,y)$ equals
\[\begin{cases}
    \mbox{the lift of $v\mapsto g\circ\tau'\tau(\res{xt(-\delta_0)},v)$ at
    $-xt(-\delta_0),-\delta_0$} &\mbox{if } \nu(x)\ge\delta_0\\
    0 & \mbox{otherwise.}
\end{cases}\]
Hence $y\mapsto f(x+\al y,y)=f'(ex+\al y,y)$ equals
\[\begin{cases}
    \mbox{the lift of $v\mapsto
    g\circ\tau'\tau(\res{e}\;\res{xt(-\delta_0)},v)$ at
    $-ext(-\delta_0),-\delta_0$} &\mbox{if } \nu(x)\ge\delta_0\\
    0 & \mbox{otherwise.}
\end{cases}\]
As $\tau'\tau\left(\begin{array}{cc} \res{e}&0\\0&1 \end{array}\right)$ has determinant $1$, this completes the proof.
\end{proof}

Remaining with the case $n=2$, we now extend the previous lemma slightly in preparation for the induction on $n$:

\begin{lemma}
Let $g$ be GL-Fubini on $\res{F}^2$, $a\in F$, $\g\in\Gamma$; set $f=g^{(0,a),(0,\g)}$. Let $\al\in F$ and set $\delta=\min(\nu(\al)+\g,0)$.

There exist $b,c\in F$ (independent of $g$) and $\tau\in\SL{2}{\res{F}}$ (independent of $g$ and $a$) such that  $x\in F$ implies $y\mapsto f(x+\al y, y)$ equals
\[\begin{cases}\mbox{the lift of
    $v\mapsto g\circ\tau(\res{(x-c)t(-\delta)},v)$ at $b,\g-\delta$}
    &\mbox{if $x\in c+t(\delta)\roi{F}$}\\
    0&\mbox{otherwise}
\end{cases}\]
\end{lemma}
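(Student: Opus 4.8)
The plan is to reduce to the previous lemma by a translation-and-scaling change of variable that turns the lift $f=g^{(0,a),(0,\g)}$ into the lift at $0,0$ of a related GL-Fubini function, to which Lemma \ref{Fubini_statement} applies directly. First I would dispose of the trivial case $\al=0$: then we are simply looking at a section of the lifted Fubini function $f$, and remark \ref{remark_section_of_lifted_function} shows that $y\mapsto f(x,y)$ is itself a lift, which gives the claim with $\tau=\mathrm{id}$, $c=0$, $b$ determined by $\res{x}$, and $\delta=\min(\g,0)$ handled as in the statement. So henceforth assume $\al\neq0$.

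The main step: apply the automorphism of $F^2$ that rescales the second coordinate by $t(-\g)$, absorbing the $(0,a),(0,\g)$ data. Explicitly, set $h(u,v)=g(u,v)$ but view $f(x,y)=g^{(0,a),(0,\g)}(x,y)$, so $f(x,y)$ is supported where $y\in a+t(\g)\roi{F}$ and there equals $g(\res{x},\res{(y-a)t(-\g)})$. Substituting $y=a+t(\g)y'$ gives $f(x,a+t(\g)y')=g^0(x,y')$, and more generally $f(x+\al y, y)$ with $y=a+t(\g)y'$ becomes $g^0(x+\al a+\al t(\g)y', y')$. Now $g^0$ on $\res{F}^2$ is GL-Fubini, and $\al t(\g)$ has valuation $\nu(\al)+\g$, so Lemma \ref{Fubini_statement} applies with the shifted base point $x+\al a$ in place of $x$ and with $\al t(\g)$ in place of $\al$; note $\delta_0$ in that lemma becomes $\min(\nu(\al)+\g,0)=\delta$ here, and the matrix $\tau\in\SL{2}{\res{F}}$ it produces is independent of $g$. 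This yields that $y'\mapsto g^0(x+\al a+\al t(\g)y', y')$ is a lift (or zero), and then undoing the substitution $y=a+t(\g)y'$ — which by lemma \ref{lemma_translation_and_scaling_of_Fubini_functions} takes lifts to lifts — gives that $y\mapsto f(x+\al y,y)$ is a lift of the desired form. Tracking the base-point and index data through both the translation by $\al a$ and the scaling by $t(\g)$ produces the explicit $b$, $c$ (each an affine-linear expression in $x$ and $a$, hence $c$ independent of $g$, and $b$ independent of $g$; $\tau$ independent of $g$ and $a$ since the $a$-dependence sits entirely in the base point, not in $\tau$), and the index $\g-\delta$ for the resulting lift.

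The routine part is the bookkeeping: one must check that the support condition ``$y\in a+t(\g)\roi{F}$ and $x+\al y\in\roi{F}$'' transforms correctly under $y=a+t(\g)y'$ into the condition ``$x\in c+t(\delta)\roi{F}$'' of the statement, and that the residue argument $\res{(x-c)t(-\delta)}$ matches what Lemma \ref{Fubini_statement} outputs after the coordinate change and the final rescaling by $\res e$ appearing there. This is a direct computation with valuations and residues, entirely parallel to the ``general case'' reduction at the end of the proof of Lemma \ref{Fubini_statement}.

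The main obstacle I anticipate is purely notational rather than conceptual: keeping straight the composition of the two changes of variable (scaling the $v$-coordinate by $t(\g)$ and the subsequent $\SL 2{}$-twist from Lemma \ref{Fubini_statement}) so that the final $\tau$ genuinely lies in $\SL{2}{\res{F}}$ and is independent of both $g$ and $a$, and so that the claimed values of $b$, $c$, and $\g-\delta$ come out exactly. Once the substitution $y=a+t(\g)y'$ is set up, everything else is forced by Lemma \ref{Fubini_statement} and lemma \ref{lemma_translation_and_scaling_of_Fubini_functions}, so no genuinely new idea is required.
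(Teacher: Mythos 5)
Your proposal is correct and follows essentially the same route as the paper: substitute $y=a+t(\g)y'$ to rewrite $f(x+\al y,y)$ as $g^0((x+\al a)+t(\g)\al y',y')$, apply Lemma \ref{Fubini_statement} with $t(\g)\al$ in place of $\al$ and $x+\al a$ in place of $x$ (so $\delta_0$ becomes $\delta$ and $\tau$ is independent of $g$ and $a$), then undo the substitution and read off $c=-\al a$, $b$, and the index $\g-\delta$. The remaining bookkeeping you defer is exactly the direct computation the paper carries out.
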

\begin{proof}
Let $e=\al^{-1}t(\nu(\al))$ if $\al\neq 0$ and $e=0$ otherwise. For $x$ in $F$ the previous lemma implies that $y\mapsto g^0(x+t(\g)\al y,y)$ equals
\[\begin{cases}
    \mbox{the lift of $v\mapsto g\circ\tau(\res{xt(-\delta)},v)$ at
    $-xet(-\delta),-\delta$} &\mbox{if } x\in t(\delta)\roi{F}\\
    0 & \mbox{otherwise}
\end{cases}\]
for some $\tau\in\SL{2}{\res{F}}$ (independent of $g$ by the previous lemma, and clearly independent of $a$).

For $x,y\in F$, the identity
\begin{align*}f&(x+\al y,y)\\
    &=g^0(x+\al y, (y-a)t(-\g))\\
    &=g^0(x+\al a+t(\g)\al(y-a)t(-\g),(y-a)t(-\g))\\
    &=\begin{cases}
    g\circ\tau(\res{(x+\al a)t(-\delta)},
        \res{((y-a)t(-\g)+xet(-\delta))t(\delta)})
        &\mbox{if } x+\al a\in t(\delta)\roi{F}\\
    0&\mbox{otherwise}
    \end{cases}
\end{align*}
follows. Set $b=a-ext(-\delta)$ and $c=-\al a$ to complete the proof.
\end{proof}

The following result extends the previous lemma to the case of arbitrary $n\ge2$; it is rather technical.

\begin{lemma}
Let $g$ be GL-Fubini on $\res{F}^n$, $a\in F$, $\g\in\G$; set $f=g^{(0,\dots,0,a),(0,\dots,0,\g)}$. Let $\al_i\in F$ for $1\le i\le n-1$. Then
\begin{enumerate}
\item For all $x_1,\dots,x_{n-1}\in F$, the function of $F$ \[x_n\mapsto f(x_1+\al_1 x_n,\dots,x_{n-1}+\al_{n-1} x_n, x_n)\] is integrable and simple.

\item Further, there exist $\tau\in\SL{n}{\res{F}}$, $\delta\in\G^{n-1}$, and $c\in F^{n-1}$ such that the function of $F^{n-1}$ \[(x_1,\dots,x_{n-1})\mapsto\int f(x_1+\al_1 x_n,\dots,x_{n-1}+\al_{n-1} x_n, x_n)\,dx_n\] is the lift of \[(u_1,\dots,u_{n-1})\mapsto \int g\circ\tau(u_1,\dots,u_n)\,du_n\;X^{\g-\sum_{i=1}^{n-1}\delta_i}\] at $b,\delta$. Also, $\tau$ may be chosen to be independent of $g$ and $a$.
\end{enumerate}
\end{lemma}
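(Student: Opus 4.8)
The plan is to prove parts (i) and (ii) simultaneously by induction on $n$, reducing each inductive step to the previous lemma (the case $n=2$) by peeling off a \emph{single} sheared coordinate. For $n=2$ there is nothing to peel: part (i) is exactly the previous lemma, and part (ii) follows by integrating — off the coset $x_1\in c+t(\delta_1)\roi{F}$ both sides vanish, and on it $y\mapsto f(x_1+\al_1 y,y)$ is the lift at $b,\g-\delta_1$ of $v\mapsto g\circ\tau(\res{(x_1-c)t(-\delta_1)},v)$, whose integral is $\bigl(\int g\circ\tau(\res{(x_1-c)t(-\delta_1)},v)\,dv\bigr)X^{\g-\delta_1}$ by remark \ref{remark_integral_of_lifted_function}; hence $x_1\mapsto\int f(x_1+\al_1 y,y)\,dy$ is the lift at $c,\delta_1$ of $u_1\mapsto\bigl(\int g\circ\tau(u_1,v)\,dv\bigr)X^{\g-\delta_1}$.

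For the inductive step I would write $S$ for the shear $(Sx)_i=x_i+\al_i x_n$ ($i<n$), $(Sx)_n=x_n$, and factor $S=S_{n-1}S'$, where $S_{n-1}$ shears only coordinate $n-1$ and $S'$ shears only coordinates $1,\dots,n-2$; these commute, so $f\circ S=(f\circ S_{n-1})\circ S'$. First I would dispose of $S_{n-1}$. Off the set $\{x_1,\dots,x_{n-2}\in\roi{F}\}$ the function $f\circ S_{n-1}$ vanishes, while on it the two‑variable slice $(x_{n-1},x_n)\mapsto f\circ S_{n-1}(x)$ is $\tilde g^{(0,a),(0,\g)}$ sheared by $\al_{n-1}$, where $\tilde g(u,v)=g(\res{x}_1,\dots,\res{x}_{n-2},u,v)$. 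Applying the previous lemma to $\tilde g$ produces $\tau_1\in\SL{2}{\res{F}}$ (independent of $\tilde g$, hence of $g$, and of $a$), constants $b_{n-1},c\in F$, and $\delta_{n-1}=\min(\nu(\al_{n-1})+\g,0)$, so that, with $\hat\tau_1=\mathrm{diag}(I_{n-2},\tau_1)\in\SL{n}{\res{F}}$ and $\g'=\g-\delta_{n-1}$,
\[f\circ S_{n-1}=(g\circ\hat\tau_1)^{(0,\dots,0,c,b_{n-1}),(0,\dots,0,\delta_{n-1},\g')}.\]
Thus after peeling coordinate $n-1$ the function is \emph{again} a lift of a GL‑Fubini function, now carrying non‑trivial centres in coordinates $n-1$ and $n$.

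Next I would reintroduce $S'$ and invoke the lemma in dimension $n-1$. The function $f\circ S=(f\circ S_{n-1})\circ S'$ vanishes unless $x_{n-1}\in c+t(\delta_{n-1})\roi{F}$; on that coset, freezing $u^{*}=\res{(x_{n-1}-c)t(-\delta_{n-1})}$ and putting $G(v_1,\dots,v_{n-1})=(g\circ\hat\tau_1)(v_1,\dots,v_{n-2},u^{*},v_{n-1})$, the function $x\mapsto f\circ S(x)$ becomes $G^{(0,\dots,0,b_{n-1}),(0,\dots,0,\g')}$ sheared by $\al_1,\dots,\al_{n-2}$ in coordinates $1,\dots,n-2$ — precisely the hypothesis of the lemma in dimension $n-1$. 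The inductive hypothesis gives part (i) at once (the slice $x_n\mapsto f\circ S(x)$ is either identically zero or the dimension‑$(n-1)$ simple function), and gives $\tau'\in\SL{n-1}{\res{F}}$ (independent of $G$, hence of $g$ and of $x_{n-1}$, and depending only on $\g'$ and $\al_1,\dots,\al_{n-2}$, hence independent of $a$), $\delta'=(\delta_1,\dots,\delta_{n-2})$ and a centre $b'$ with $(x_1,\dots,x_{n-2})\mapsto\int f\circ S(x)\,dx_n$ the lift at $b',\delta'$ of $(u_1,\dots,u_{n-2})\mapsto\bigl(\int G\circ\tau'(u_1,\dots,u_{n-1})\,du_{n-1}\bigr)X^{\g'-\sum_{i=1}^{n-2}\delta_i}$. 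Identifying $G\circ\tau'$ with the appropriate slice of $g\circ\hat\tau_1\hat\tau'$, where $\hat\tau'\in\SL{n}{\res{F}}$ embeds $\tau'$ into the coordinates $\{1,\dots,n-2,n\}$, and reinstating the parameter $x_{n-1}$, I get that $(x_1,\dots,x_{n-1})\mapsto\int f\circ S(x)\,dx_n$ is the lift at $(b',c),(\delta',\delta_{n-1})$ of $(u_1,\dots,u_{n-1})\mapsto\bigl(\int(g\circ\tau)(u_1,\dots,u_n)\,du_n\bigr)X^{\g'-\sum_{i=1}^{n-2}\delta_i}$ with $\tau=\hat\tau_1\hat\tau'\in\SL{n}{\res{F}}$; since $\g'-\sum_{i=1}^{n-2}\delta_i=\g-\sum_{i=1}^{n-1}\delta_i$, this is the assertion (with $\delta=(\delta_1,\dots,\delta_{n-1})$), and $\tau$ is independent of $g$ and $a$ because $\hat\tau_1$ and $\hat\tau'$ are.

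The content is largely bookkeeping, but a few points need care. One needs the auxiliary fact — a short induction straight from the definitions — that a slice of a (GL‑)Fubini function is (GL‑)Fubini, so that the previous lemma and the inductive hypothesis may legitimately be applied to $\tilde g$ and $G$. The real structural step is the observation that peeling the \emph{last} sheared coordinate leaves $f\circ S_{n-1}$ again a lift of a GL‑Fubini function, with an extra non‑trivial centre that can then be frozen as a parameter for the $(n-1)$‑dimensional induction. The remaining work is to keep the data straight: identifying $G\circ\tau'$ with the right slice of a matrix‑twist of $g$, assembling $\tau$ from an $\SL_2$‑block and an $\SL_{n-1}$‑block and checking it has determinant $1$ and stays independent of $g$ and $a$, and reconciling the centres and the exponent (the "$\g$" fed to the inner induction being $\g'=\g-\delta_{n-1}$, so that $\g'-\sum_{i=1}^{n-2}\delta_i=\g-\sum_{i=1}^{n-1}\delta_i$). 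I expect this last layer of bookkeeping, together with getting the right coordinate to peel, to be where all the effort goes.
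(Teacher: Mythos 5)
Your argument is essentially the paper's own proof: the same induction on $n$, first handling the $(x_{n-1},x_n)$ shear via the preceding translated $n=2$ lemma with $x_1,\dots,x_{n-2}$ as parameters in $\roi{F}$ (so that the partially sheared function is again a lift of $g$ composed with a block-embedded $\SL{2}{\res{F}}$ element), then freezing $x_{n-1}$ in its coset and applying the inductive hypothesis in dimension $n-1$ to the shears $\al_1,\dots,\al_{n-2}$, and finally assembling $\tau$ as the product of the block-embedded $\SL{2}{\res{F}}$ and $\SL{n-1}{\res{F}}$ matrices with the exponent bookkeeping $\g-\delta_{n-1}-\sum_{i=1}^{n-2}\delta_i=\g-\sum_{i=1}^{n-1}\delta_i$. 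The two points you leave semi-informal --- that sections of GL-Fubini functions are GL-Fubini, and that the centres and exponents produced by the inductive hypothesis do not depend on the frozen coordinate $x_{n-1}$ --- are exactly the points the paper also treats implicitly, so your write-up matches it in both substance and level of detail.
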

\begin{proof}
The proof is by induction on $n$.

Let $\delta_{n-1}=\min(\nu(\al_{n-1})+\g,0)$. Let $\xi_1,\dots,\xi_{n-2}$ be in $\roi{F}$; the function \[(x_{n-1},x_n)\mapsto f(\xi_1,\dots,\xi_{n-2},x_{n-1},x_n)\] is the lift of \[(u_{n-1},u_n)\mapsto g(\res{\xi}_1,\dots,\res{\xi}_{n-2},u_{n-1},u_n),\] which is GL-Fubini, at $(0,a),(0,\g)$; this is just a generalisation of remark \ref{remark_section_of_lifted_function} to a two dimensional section. By the previous lemma, there exist $b,c_{n-1}\in F$ and $\tau\in\SL{2}{\res{F}}$, all independent of $\xi_1,\dots,\xi_{n-2}$, such that for all $x_{n-1}\in F$, \[x_n\mapsto f(\xi_1,\dots,\xi_{n-2},x_{n-1}+\al_{n-2}x_n,x_n)\] equals the lift of \[u_n\mapsto
    g(\res{\xi}_1,\dots,\res{\xi}_{n-2},\tau(\res{(x_{n-1}-c_{n-1})t(-\delta_{n-1})}),u_n)\] at $b,\g-\delta_{n-1}$ if $x_{n-1}\in c_{n-1}+t(\delta_{n-1})\roi{F}$, and equals $0$ otherwise.

Also denote by $\tau$ the element of $\SL{n}{\res{F}}$ given by $\left(\begin{array}{cc} I_{n-2}&0\\0&\tau\end{array}\right)$, where $I_{n-2}$ denotes the $n-2$ by $n-2$ identity matrix.

Now take $\xi_{n-1}\in c_{n-1}+t(\delta_{n-1})\roi{F}$; so $\xi_{n-1}=c_{n-1}+t(\delta_{n-1})\xi_{n-1}'$, say. It has been shown that \[(x_1,\dots,x_{n-2},x_n)\mapsto f(x_1,\dots,x_{n-2},\xi_{n-1}+\al_{n-1},x_n)\] is the lift of \[(u_1,\dots,u_{n-2},u_n)\mapsto g\circ\tau(u_1,\dots,u_{n-2},\res{\xi}_{n-1}',u_n),\] which is GL-Fubini, at $(0,\dots,0,b), (0,\dots,0,\g-\delta_{n-1})$. By the inductive hypothesis, the following hold:
\begin{enumerate}
\item For all $x_1,\dots,x_{n-2}\in F$, \[x_n\mapsto f(x_1+\al_1 x_n,\dots,\xi_{n-1}+\al_{n-1},x_n)\] is the lift of a simple integrable function.
\item There exists $\tau'\in\SL{n-1}{\res{F}}$ (independent of $\xi_{n-1}$, $g$, $b$) and $\delta_i\in\G$, $c_i\in F$ ($1\le i\le n-2$), such that \[(x_1,\dots,x_{n-2})\mapsto\int f(x_1+\al_1x_n,\dots,\xi_{n-1}+\al_{n-1},x_n)\,dx_n\] is the lift of \[(u_1,\dots,u_{n-2})\mapsto\int g\circ\tau\tau'(u_1,\dots,u_{n-2},\res{\xi_{n-1}},u_n)\,du_n\;X^{\g-\delta_{n-1}-\sum_{i=1}^{n-2}\delta_i}\] at $(c_1,\dots,c_{n-2}),(\delta_1,\dots,\delta_{n-2})$.
\end{enumerate}

It follows that
\begin{enumerate}
\item
For any $x_1,\dots,x_{n-1}$ in $F$, \[x_n\mapsto f(x_1+\al_1 x_n,\dots,x_{n-1}+\al_{n-1},x_n)\] is a simple integrable function (this function is zero unless $x_{n-1}\in-\al a+t(\delta_{n-1})\roi{F}$, in which case the statement follows from (i) above).
\item The function
\[(x_1,\dots,x_{n-1})\mapsto\int f(x_1+\al_1x_n,\dots,x_{n-1}+\al_{n-1}x_n,x_n)\,dx_n\]
is the lift of
\[(u_1,\dots,u_{n-1})\mapsto\int g\circ\tau\tau'(u_1,\dots,u_n)\,du_n\;X^{\g-\sum_{i=1}^{n-1}\delta_i}\] at $(c_1,\dots,c_{n-1}),(\delta_1,\dots,\delta_{n-1})$.
\end{enumerate}
This completes the proof.
\end{proof}

The following lemma was concerned with the case of a matrix differing from the identity only along the left-most column. We now consider the case of an arbitrary u.u.t. matrix:

\begin{proposition}\label{prop_unipotent_is_Fubini}
Suppose $g$ is GL-Fubini on $\res{F}^n$, $a\in F^n$, $\g\in\Gamma^n$, $\delta\in\Gamma$; set $f=g^{a,\g}\,X^{\delta}$. Let $U$ be a u.u.t. matrix in $\GL{n}{F}$. Then $f\circ U$ is strongly Fubini on $F^n$, with
\[\int^{F^n} f\circ U(x)\,dx=\int^{F^n} f (x)\,dx.\]
\end{proposition}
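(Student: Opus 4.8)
The plan is to induct on $n$. For $n=1$ the only unipotent upper triangular matrix in $\GL{1}{F}$ is the identity and $f$ is already simple, so there is nothing to prove. Suppose $n>1$ and that the proposition holds in dimension $n-1$. It suffices to verify, for every $r$ with $1\le r\le n$: (a) the section $x_r\mapsto f\circ U(x)$ is simple on $F$; (b) the function
\[F_r\colon(x_1,\dots,\no x_r,\dots,x_n)\longmapsto\int^F f\circ U(x)\,dx_r\]
is strongly Fubini on $F^{n-1}$; and (c) $\int^{F^{n-1}}F_r$ is independent of $r$ and equals $\int^{F^n}f$.

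For (a), fix the $x_j$ with $j\ne r$; then $x_r\mapsto Ux$ parametrises a line in $F^n$ in the direction of the $r$-th column of $U$, whose $r$-th entry is $1$. On the translated fractional ideal that supports $g^{a,\g}\circ U$, the section is therefore $X^{\delta}$ times a lift of the restriction of $g$ to a genuine line inside $\roi{\res F}^n$; since $g$ is GL-Fubini, such a restriction is Haar integrable (carry a coordinate axis onto the line by some $\tau\in\GL{n}{\res F}$ and use that $g\circ\tau$ is Fubini), so the section is simple.

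Part (b) is the core. Fix the $x_j$ ($j\ne r$) and substitute $z_r=(Ux)_r$, which is a translation, so $\int\,dx_r=\int\,dz_r$. Then $(Ux)_i$ is constant for $i>r$, equals $z_r$ for $i=r$, and has the form $U_{ir}z_r+e_i$ for $i<r$, each $e_i$ being of the shape $x_i+(\text{terms in the later remaining variables})$. Sectioning the lift in coordinates $r+1,\dots,n$ (remark \ref{remark_section_of_lifted_function}) reduces the integrand, up to the scalar $X^{\delta}$, to a lift on $\res F^r$ of a GL-Fubini function built from $g$, composed with the last-column shear $I+\sum_{i<r}U_{ir}E_{ir}$; after an affine change of variables of the type handled by lemma \ref{lemma_translation_and_scaling_of_Fubini_functions} this is exactly the setting of the lemma just proved, in dimension $r$. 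That lemma shows the $z_r$-section is simple and integrable, and exhibits $F_r$, as a function of $(e_1,\dots,e_{r-1})$ and of $x_{r+1},\dots,x_n$, as a lift of a function $\widehat K$ on $\res F^{n-1}$ times a power of $X$, where $\widehat K$ comes from integrating out one slot of $g\circ\widehat\tau$ for a suitable $\widehat\tau\in\SL{n}{\res F}$; a short argument using the block embedding $\GL{n-1}{\res F}\hookrightarrow\GL{n}{\res F}$ and the definition of Fubini shows $\widehat K$ is GL-Fubini. Moreover the passage from $(x_1,\dots,\no x_r,\dots,x_n)$ to $(e_1,\dots,e_{r-1},(Ux)_{r+1},\dots,(Ux)_n)$ is itself a unipotent upper triangular matrix $W_r\in\GL{n-1}{F}$, so $F_r$ is (a lift of a GL-Fubini function times a power of $X$) composed with $W_r$; the inductive hypothesis then shows $F_r$ is strongly Fubini on $F^{n-1}$, with $\int^{F^{n-1}}F_r$ equal to $\int_{\res F^{n-1}}\widehat K$ times a power of $X$, by remark \ref{remark_integral_of_lifted_function}.

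For (c), note first that $\int_{\res F^{n-1}}\widehat K=\int_{\res F^n}g\circ\widehat\tau=\int_{\res F^n}g$, by Fubini on $\res F^n$ and the invariance of Haar measure under $\SL{n}{\res F}$; so the residue part of $\int^{F^{n-1}}F_r$ does not depend on $r$. Tracking through the $\min(\cdot,0)$ exponents and the lift-centres produced by the preceding lemma, the remaining power of $X$ comes out as $\delta+\sum_{i=1}^n\g_i$ for every $r$, whence $\int^{F^{n-1}}F_r=\big(\int_{\res F^n}g\big)X^{\delta+\sum_i\g_i}=\int^{F^n}f$ by remark \ref{remark_integral_of_lifted_function}, completing the induction. (Independence of $r$ could also be obtained more cheaply: for $r<n$ the $x_r$- and $x_n$-integrations inside $\int^{F^{n-1}}F_r$ may be exchanged, since the two-dimensional $(x_r,x_n)$-section of $f\circ U$ is a lift of a GL-Fubini function on $\res F^2$ composed with an affine map and hence is Fubini on $F^2$ by the argument of lemma \ref{Fubini_statement}, and one then reorganises using that $F_n$ is strongly Fubini, reducing matters to the single case $r=n$, which is handled by the factorisation $U=MV$ with $M=I+\sum_{i<n}U_{in}E_{in}$ and $V$ block-diagonal with u.u.t.\ block $V'\in\GL{n-1}{F}$.) The step I expect to be the main obstacle is the bookkeeping in (b) and (c): exhibiting $W_r$ and $\widehat K$ explicitly, fitting the tilted $z_r$-integration to the preceding lemma, and controlling the powers of $X$ and the centres of the lifts throughout; the remainder is a routine chase through the earlier lemmas and the inductive hypothesis.
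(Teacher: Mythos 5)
Your proposal is correct and follows essentially the same route as the paper's proof: induction on $n$, isolating for each $r$ the shear coming from the $r$-th column of $U$ (your $z_r$-substitution and the matrix $W_r$ are just the paper's factorisation $U=PV$ with its $(n-1)\times(n-1)$ u.u.t.\ matrix $V'$, up to harmless extra upper-triangular terms), dispatching the $x_r$-integration with the preceding technical lemma applied after sectioning away the later coordinates, and then feeding the resulting lift composed with a u.u.t.\ matrix to the inductive hypothesis, with the same $X$-power bookkeeping via remark \ref{remark_integral_of_lifted_function} and $\SL{n}{\res{F}}$-invariance of the Haar integral. The only caveats are minor: your stand-alone justification of (a) as ``the restriction of $g$ to a line in $\roi{\res{F}}^n$'' glosses over the valuation trichotomy of the shear coefficients that lemma \ref{Fubini_statement} exists to handle (harmless, since your step (b) rederives the simplicity of the section from that lemma), while your explicit block-embedding check that the partially integrated function $\widehat{K}$ is GL-Fubini supplies a detail the paper leaves implicit.
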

\begin{proof}
The proof is by induction on $n$.

For any $n$, we claim that it suffices to prove the special case $a=0$, $\g=0$, $\delta=0$. We may clearly assume $\delta=0$ by linearity. For $x\in F^n$ the identity
\begin{align*}
f(Ux)=g^0(Ux)&=g^{0,0}((Ux-a)t(-\g))\\
    &=g^0\circ U_1(t(-\g)(x-U^{-1}a))
\end{align*}
holds, where $U_1$ is the u.u.t. matrix
\[U_1=
    \left(\begin{array}{ccc}t(-\g_1)&&\\&\ddots&\\&&t(-\g_n)\end{array}\right)
    U
    \left(\begin{array}{ccc}t(\g_1)&&\\&\ddots&\\&&t(\g_n)\end{array}\right)
\]
The special case implies that $g^0\circ U_1$ is strongly Fubini with repeated integral equal to that of $g^0$. Thus $f\circ U$ differs from a strongly Fubini function by translation and scaling and hence is itself strongly Fubini (lemma \ref{lemma_translation_and_scaling_of_Fubini_functions}) is therefore strongly Fubini while compatibility with the repeated integral on $F^n$ and Haar integral on $\res{F}^n$ (proposition \ref{proposition_integral_of_lifted_function}) implies
\begin{align*}
    \int^{F^n} f\circ U(x)\,dx&=\abs{t(\g)} \int^{F^n} g^0(x) \,dx\\
    &=X^{\sum_{i=1}^n \g_i}\int_{\res{F}^n}g(u)\,du\\
    &=\int^{F^n} f(x) \,dx
\end{align*}
This completes the proof of the claim.

For each $r$ with $1\le r\le n$, we must now prove that
\begin{enumerate}
\item For $x_1,\dots,\no{x}_r,\dots,x_n\in F$, the function of $F$ $x_r\mapsto f\circ U(x_1,\dots,x_n)$ is simple and integrable.
\item The function of $F^{n-1}$ \[(x_1,\dots,\no{x}_r,\dots,x_n)\mapsto \int f\circ U(x_1,\dots,x_n)\,dx_r\] is strongly Fubini, with repeated integral equal to that of $f$.
\end{enumerate}

The inductive step depends on decomposing $U$ in a certain way. Write
\[U=\left(\begin{array}{ccccc}
    1&\al_{1,2}&\cdots&\al_{1,n}\\
     & \ddots& \ddots& \vdots\\
     & & \ddots& \al_{n-1,n}\\
     & & & 1
\end{array}\right)\]
and observe that $U(x_1,\dots,x_n) =(x_1+\sum_{i=2}^n\al_{1,i},\dots,x_{n-1}+\al_{n-1,n}x_n,x_n)$. Let $V$ be the u.u.t. matrix obtained by setting to zero all entries in the $r^{\mbox{\tiny th}}$ row and $r^{\mbox{\tiny th}}$ column of $U$, apart from the $1$ in the $r,r$-place. Let $V'$ be the $n-1$ by $n-1$ u.u.t. matrix obtained by removing the $r^{\mbox{\tiny th}}$ row and $r^{\mbox{\tiny th}}$ column of $U$. There exist $\beta_{r+1},\dots,\beta_n\in F$ such that the u.u.t. matrix $P$ defined by
\[P(x_1,\dots,x_n)=(x_1+\al_{1,r}x_r,\dots,x_{r-1}+\al_{r-1,r}x_r,x_r+\sum_{i=r+1}^n\beta_i x_i,x_{r+1},\dots,x_n)\]
satisfies $U=PV$.

We are now equipped to begin the main part of the proof. The previous lemma (if $r>1$; it follows straight from the definition of a strongly Fubini function if $r=1$) implies that for fixed $x_1,\dots,\no{x}_r,\dots,x_n\in F$, the function
\[x_r\mapsto f((x_1-\al_{1,r}\sum_{i=r+1}^n\beta_i x_i)+\al_{1,r}x_r, \dots,(x_{r-1}-\al_{r-1,r}\sum_{i=r+1}^n\beta_i x_i)+\al_{r-1,r}x_r, x_r,\dots,x_n)\]
is simple and integrable on $F$. Therefore
\begin{align*}
	x_r\mapsto &f(x_1+\al_{1,r}x_r,\dots,x_{r-1}+\al_{r-1,r}x_r,x_r+\sum_{i=r+1}^n\beta_i x_i,x_{r+1}\dots,x_n)\\
	=&f\circ P(x_1,\dots,x_n)
	\end{align*}
is a translate of a simple integrable function and hence is simple and integrable by remark \ref{remark_translation_and_scaling_of_integrable_functions}. Replacing $x_1,\dots,\no{x}_r,\dots,x_n$ by $V'(x_1,\dots,\no{x}_r,\dots,x_n)$ implies that the function
\begin{align*}
	x_r\mapsto &f\circ PV(x_1,\dots,x_n)\\ &=f\circ U(x_1,\dots,x_n)
\end{align*}
is simple and integrable, proving (i).

The previous lemma (if $r>1$) and translation invariance (any $r$) of the integral also imply that \[f':(x_1,\dots,\no{x}_r,\dots,x_n)\mapsto \int f\circ P(x_1,\dots,x_n)\,dx_r\] is the lift of \[(u_1,\dots,\no{u}_r,\dots,u_n)\mapsto\int g\circ\tau(u_1,\dots,u_n)\,du_r \;X^{-\sum_{i=1}^{n-1}\delta_i}\] at $b,\delta$ for some $b\in F^{n-1}$, $\delta=(\delta_i)\in\G^{n-1}$.

The inductive hypothesis with function $f'$ and matrix $V'$ implies that $f'\circ V'$ is strongly Fubini with repeated integral equal to that of $f'$. But the repeated integral of $f'$ is
\begin{align*}
	\int_{\res{F}^n} g\circ\tau(u)\,du\;X^{-\sum_{i=1}^{n-1}\delta_i}\;X^{\sum_{i=1}^{n-1}\delta_i}
	&=\int_{\res{F}^n} g(u) \,du\\
	&=\int^{F^n} f(x)\,dx
	\end{align*}
by remark \ref{remark_integral_of_lifted_function}, and
\begin{align*}
	f'\circ V'(x_1,\dots,\no{x}_r,\dots,x_n)
	&=\int f\circ P V (x_1,\dots,x_n)\,dx_r\\
	&=\int f\circ U(x_1,\dots,x_n)\,dx_r,
	\end{align*}
which proves (ii).
\end{proof}

\begin{proposition}\label{prop_GL_is_Fubini}
Let $g$ be GL-Fubini on $\res{F}^n$, $a\in F^n$, $\g\in\Gamma^n$, $\delta\in\Gamma$; set $f=g^{a,\g}\,X^{\delta}$. Let $\tau\in\GL{n}{F}$; then $f\circ \tau$ is strongly Fubini on $F^n$, with
\[\int^{F^n} f\circ \tau(x)\,dx=\abs{\det{\tau}}^{-1}\int^{F^n} f(x) \,dx.\]
\end{proposition}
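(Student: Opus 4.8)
The plan is to reduce the general statement to the unipotent case already handled in Proposition~\ref{prop_unipotent_is_Fubini}, using the Iwasawa decomposition (Lemma~\ref{lemma_Iwasawa_decomposition}) together with the scaling behaviour of strongly Fubini functions (Lemma~\ref{lemma_translation_and_scaling_of_Fubini_functions}). By $\CG$-linearity we may at once assume $\delta=0$. Write $\tau=AU\Lambda$ with $A\in\GL{n}{\roi{F}}$, $U$ a u.u.t. matrix in $\GL{n}{F}$, and $\Lambda=\mathrm{diag}(\lambda_1,\dots,\lambda_n)$ in $\GL{n}{F}$. Then for $x\in F^n$,
\[
 f\circ\tau(x)=f\circ A\,U\,\Lambda(x)=\big((f\circ A)\circ U\big)(\Lambda x).
\]
The first observation is that $f\circ A$ is again of the form to which Proposition~\ref{prop_unipotent_is_Fubini} applies: since $A\in\GL{n}{\roi{F}}$ we have $g^{a,\g}\circ A=h^{a',\g'}$ for a suitable GL-Fubini $h$ on $\res{F}^n$ (obtained by precomposing $g$ with $\res A\in\GL{n}{\res{F}}$, which preserves GL-Fubini-ness) and suitable $a'\in F^n$, $\g'\in\Gamma^n$ — this is the same ``absorption'' computation used in Lemma~\ref{lemma_corollary_to_Iwasawa_decomposition}, noting $\res{A}$ need not fix the lattice $\roi{F}^n$ coordinatewise but does preserve it as a whole, so the residue structure is carried through. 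In particular $\det\res A\in\res{F}^\times$ is a unit, so $\abs{\det A}=X^{0}=1$.

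Next, Proposition~\ref{prop_unipotent_is_Fubini} applied to the function $f\circ A$ and the matrix $U$ gives that $(f\circ A)\circ U$ is strongly Fubini on $F^n$ with $\int^{F^n}(f\circ A)\circ U(x)\,dx=\int^{F^n} f\circ A(x)\,dx$; and the latter equals $\int^{F^n}f(x)\,dx$, again by the absorption identity and Remark~\ref{remark_integral_of_lifted_function} (both sides are $\int_{\res{F}^n}g(u)\,du\cdot X^{\sum\g_i}$, since the integral over $\res{F}^n$ is $\GL{n}{\roi{\res{F}}}$-invariant, indeed $\GL{n}{\res{F}}$ changes it only by $\abs{\det\res A}^{-1}=1$). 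Finally, $f\circ\tau(x)=\big((f\circ A)\circ U\big)(\Lambda x)$ is the diagonal scaling of a strongly Fubini function by $\lambda=(\lambda_1,\dots,\lambda_n)\in{\mult{F}}^n$, so Lemma~\ref{lemma_translation_and_scaling_of_Fubini_functions} gives that $f\circ\tau$ is strongly Fubini with
\[
 \int^{F^n} f\circ\tau(x)\,dx=\abs{\lambda}^{-1}\int^{F^n}(f\circ A)\circ U(x)\,dx
 =\abs{\lambda}^{-1}\int^{F^n}f(x)\,dx.
\]
It remains to identify $\abs{\lambda}^{-1}$ with $\abs{\det\tau}^{-1}$: since $\det\tau=\det A\cdot\det U\cdot\det\Lambda=\det A\cdot\prod_i\lambda_i$ with $\abs{\det A}=1$ and $\abs{\cdot}$ multiplicative on $\mult{F}$ (Theorem~\ref{theorem_main_properties_of_integral}), we get $\abs{\det\tau}=\prod_i\abs{\lambda_i}=\abs{\lambda}$, which closes the argument.

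The main obstacle I anticipate is the bookkeeping in the ``absorption'' step $g^{a,\g}\circ A=h^{a',\g'}$: one must check carefully that precomposing by $A\in\GL{n}{\roi{F}}$ sends a lifted function to a lifted function (with the new residue-level function still GL-Fubini) rather than merely to an element of $\calL(F^n,\mathrm{GL}_n)$, and track how the translation/twist parameters $(a,\g)$ transform — in general $A$ does not act diagonally, so the product-of-fractional-ideals structure $a+t(\g)\roi{F}^n$ is only preserved because $A$ has unit determinant and integral entries, hence $A\roi{F}^n=\roi{F}^n$. Once this identity is in hand, everything else is a direct concatenation of Proposition~\ref{prop_unipotent_is_Fubini}, Lemma~\ref{lemma_translation_and_scaling_of_Fubini_functions}, and the multiplicativity of the absolute value; no genuinely new estimate is needed. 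A minor subtlety worth a sentence is that $\Lambda$ may be absorbed instead as $t(\nu(\lambda_i))$ versus a unit part, but since Lemma~\ref{lemma_translation_and_scaling_of_Fubini_functions} already handles arbitrary scalings in $\mult{F}$ this split is unnecessary here.
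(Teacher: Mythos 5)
Your overall architecture --- Iwasawa decomposition $\tau=AU\Lambda$, absorb $A$ into the residue-level function, apply Proposition~\ref{prop_unipotent_is_Fubini} to $U$, and treat $\Lambda$ by Lemma~\ref{lemma_translation_and_scaling_of_Fubini_functions} --- is the same as the paper's, but two of your intermediate claims are false as stated. First, the absorption identity $g^{a,\g}\circ A=h^{a',\g'}$ fails for general $\g\in\Gamma^n$. The fact that $A\roi{F}^n=\roi{F}^n$ only covers the case $\g=0$: for non-constant $\g$ the set $A^{-1}\bigl(a+t(\g)\roi{F}^n\bigr)$ is a skewed lattice coset, not of the form $a'+t(\g')\roi{F}^n$, and the composed function need not factor through any affine residue map. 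Concretely, take $n=2$, $A=\left(\begin{smallmatrix}1&1\\0&1\end{smallmatrix}\right)$, $\g=(\g_1,0)$ with $\g_1>0$, and $g$ the characteristic function of $\roi{\res{F}}^2$ (say $\res{F}$ non-archimedean): then $g^{0,\g}\circ A(x_1,x_2)=g(\res{(x_1+x_2)t(-\g_1)},\res{x}_2)$ on $\{x_1+x_2\in t(\g_1)\roi{F},\,x_2\in\roi{F}\}$, and factoring this through the residue map of a box $a'+t(\g')\roi{F}^2$ forces $\g'_2\ge\g_1>0$, while containing the support (whose $x_2$-projection is all of $\rho^{-1}(\roi{\res{F}})$) forces $\g'_2\le 0$; so it is not a lifted function. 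This is precisely why the paper first reduces to $a=0$, $\g=0$, $\delta=0$: writing $f(\tau x)=g^0\bigl(t(-\g)\tau(x-\tau^{-1}a)\bigr)$, one applies the special case to the matrix $t(-\g)\tau\in\GL{n}{F}$ and recovers the general statement from translation invariance, the factors $X^{\sum_i\g_i}$ cancelling. You skipped this reduction, and without it the absorption step is a genuine gap.

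Second, your assertion that $\abs{\det A}=X^0=1$ for $A\in\GL{n}{\roi{F}}$ is wrong. For a unit $u\in\mult{\roi{F}}$ the absolute value of Theorem~\ref{theorem_main_properties_of_integral} is $\abs{u}=\abs{\res{u}}$, the real absolute value of the residue in the \emph{local field} $\res{F}$; $\res{u}$ is a nonzero element of $\res{F}$ but by no means a unit of $\roi{\res{F}}$ (for instance $\res{F}=\mathbb{Q}_p$ and $\res{u}=p$ gives $\abs{u}=p^{-1}$). Hence in general $\int^{F^n}f\circ A(x)\,dx=\abs{\det\res{A}}^{-1}\int^{F^n}f(x)\,dx\neq\int^{F^n}f(x)\,dx$, and likewise $\abs{\det\tau}=\abs{\det A}\,\abs{\det\Lambda}\neq\abs{\lambda}$ in general; your final formula comes out right only because these two errors cancel. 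The paper keeps the factor $\abs{\det\res{A}}^{-1}=\abs{\det A}^{-1}$, which arises from the Haar change of variables on $\res{F}^n$ via Proposition~\ref{proposition_integral_of_lifted_function} and is the heart of the lifting statement, and combines it with $\abs{\det\Lambda}^{-1}$ at the end using $\det\tau=\det A\det\Lambda$. With the reduction to $a=0$, $\g=0$ and this bookkeeping restored, your argument coincides with the paper's proof.
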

\begin{proof}
We claim that it suffices to prove the special case $a=0$, $\g=0$, $\delta=0$. This claim follows in the same way as the beginning of proposition \ref{prop_unipotent_is_Fubini}. Now assume $a=0$, $\g=0$, $\delta=0$.

Write $\tau=A U\Lambda$ as in lemma \ref{lemma_Iwasawa_decomposition}. Then $f\circ A=(g\circ\res{A})^0$ where $\res{A}$ is the image of $A$ in $\GL{n}{\res{F}}$; proposition \ref{proposition_integral_of_lifted_function} implies
\begin{align*}
    \int^{F^n}f\circ A(x)\,dx&=\int_{\res{F}^n}g\circ\res{A}(u)\,du\\
    &=\abs{\det{\res{A}}}^{-1}\int_{\res{F}^n} g(u)\,du\\
    &=\abs{\det{A}}^{-1} \int^{F^n} f(x)\,dx.
\end{align*}
Proposition \ref{prop_unipotent_is_Fubini} implies that $f^0\circ AU$ is strongly Fubini with
\[\int^{F^n}f\circ AU(x)\,dx=\int^{F^n}f\circ A(x)\,dx.\] Finally lemma \ref{lemma_translation_and_scaling_of_Fubini_functions} implies that $f\circ AU\Lambda$ is strongly Fubini, with \[\int^{F^n}f\circ AU\Lambda(x)\,dx=\abs{\det\Lambda}^{-1}\int^{F^n}f\circ A U(x)\,dx.\] Since $\det\tau=\det A\det\Lambda$, the proof is complete.
\end{proof}

The previous proposition extends by linearity to all of $\calL(F^n,\mbox{GL}_n)$ and so the main theorem is proved!

\begin{remark}
Suppose $F$ is a two-dimensional local field, with $\ROI{F}=\rho^{-1}(\roi{\res{F}})$ the rank two ring of integers. Assume that our chosen Haar measure on $\res{F}$ assigns $\roi{F}$ measure $1$. Then for any $\tau\in\GL{n}{F}$ and $a\in F^n$, the characteristic function of $a+\tau(\ROI{F}^n)$ belongs to $\calL(F^n)$, and \[\int^{F^n}\Char{a+\tau(\ROI{F}^n)}(x)\,dx=\abs{\det{\tau}}\in\Comp(X)=\CG.\]

Kim and Lee \cite{Kim-Lee1} have developed a measure theory on the algebra of sets generated by $\emptyset$, $F^n$ and $a+\tau(\ROI{F}^n)$ for $a\in F^n$, $\tau\in\GL{n}{F}$, under which $a+\tau(\ROI{F}^n)$ is given measure $\abs{\det{\tau}}$.

The main difference between the theory of Kim and Lee and that developed in this paper is that their measure takes values in a certain monoid of monomials rather than in a field; this allows Kim and Lee to extend their measure to the $\sigma$-algebra generated by the original algebra. Some parts of their theory may be recovered from this paper by 'taking leading terms' from $\Comp(X)$.
\end{remark}
%
%

\section{Invariant integral on $\GL{n}{F}$}
We will now consider integration on the space of matrices $\M{N}{F}$ and its unit group $\GL{N}{F}$.

Let $n=N^2$ and identify $\M{N}{F}$ with $F^n$ via an isomorphism $T:F^n\to\M{N}{F}$ of $F$ vector spaces. Let $\calL(\M{N}{F})$ be the $\CG$ space of $\CG$-valued functions $f$ on $\M{N}{F}$ for which $fT$ belongs to $\calL(F^n, \mbox{GL}_n)$; set \[\int^{\ssc{\M{N}{F}}}f(x)\,dx=\int^{F^n} fT(x)\,dx.\] 

\begin{remark}
The space $\calL(\M{N}{F})$ does not depend on the choice of the isomorphism $T$ since $\calL(F^n)$ is invariant under the action of $\GL{n}{F}$, and the functional $\int^{\ssc{\M{N}{F}}}$ depends on $T$ only up to a scaler multiple from $\abs{\mult{F}}=\{\lambda X^{\g}:\lambda\in\mult{\Comp},\g\in\Gamma\}$.

$\calL(\M{N}{N})$ is closed under translation, and $\int^{\ssc{\M{N}{F}}}$ is a translation invariant $\CG$-linear functional on the space.
\end{remark}

Of course, integrating on $\M{N}{F}$ is no harder than integrating on $F^n$. We are really interested in $\GL{N}{F}$:

\begin{definition}
Let $\calL(\GL{N}{F})$ denote the space of $\CG$-valued functions $\phi$ on $\GL{N}{F}$ such that $\tau\mapsto\phi(\tau)\abs{\det\tau}^{-n}$ extends to a function of $\calL(\M{N}{F})$.

The integral of $\phi$ over $\GL{N}{F}$ is defined by \[\int^{\ssc{\GL{N}{F}}} \phi(\tau)\,d\tau=\int^{\ssc{\M{N}{F}}} \phi(x)\abs{\det x}^{-n}\,dx,\] where the integrand on the right is really the extension of the function to $\M{N}{F}$.
\end{definition}

\begin{remark}
For the previous definition of the integral to be well defined, we must show that if $f_1,f_2\in\calL(\M{N}{F})$ are equal when restricted to $\GL{N}{F}$ then $f_1=f_2$.

It suffices to prove that if $f\in\calL(F^n)$ vanishes off some Zariski closed set (other than $F^n$), then $f$ is identically zero. By a \emph{locally constant} function $g$ on $F^n$, we mean a function such that for each $a\in F^n$, there exists $\g\in\Gamma$ such that, if $\varepsilon_1,\dots,\varepsilon_n\in F$ have valuation greater than $\g$, then $f(a_1+\epsilon_1,\dots,a_n+\epsilon_n)=f(a_1,\dots,a_n)$. If $g_1$, $g_2$ are locally constant, then so are $g_1+g_2$ and $g_1\circ A$ for any affine transformation of $F^n$. But a lifted function is locally constant and so any function in $\calL(F^n)$ is locally constant. It is now enough to show that if $p$ is a polynomial in $F[X_1,\dots,X_n]$, such that $p(\epsilon_1,\dots,\epsilon_n)=0$ whenever $\varepsilon_1,\dots,\varepsilon_n\in F$ have large enough valuation, then $p$ is the zero polynomial. This is easily proved by induction on $n$ and completes the proof.
\end{remark}

The integral is translation invariant:

\begin{proposition}\label{prop_trans_invariance_of_integral_on_GL}
Suppose $\phi$ belongs to $\calL(\GL{N}{F})$ and $\sigma\in\GL{N}{F}$. Then the functions $\tau\mapsto \phi(\sigma\tau)$ and $\tau\mapsto \phi(\tau\sigma)$ belong to $\calL(\GL{N}{F})$, with
\[\int^{\ssc{\GL{N}{F}}}\phi(\sigma\tau)\,d\tau = \int^{\ssc{\GL{N}{F}}} f(\tau)\,d\tau
    =\int^{\ssc{\GL{N}{F}}}\phi(\tau\sigma)\,d\tau\]
\end{proposition}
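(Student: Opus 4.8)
The plan is to reduce the left- and right-translation statements to the change-of-variables results already established for $\calL(\M{N}{F})$, exploiting the multiplicativity of the determinant. Fix $\phi\in\calL(\GL{N}{F})$ and $\sigma\in\GL{N}{F}$, and write $f$ for the extension of $\tau\mapsto\phi(\tau)\abs{\det\tau}^{-n}$ to $\calL(\M{N}{F})$. By definition the integral of $\phi$ equals $\int^{\ssc{\M{N}{F}}}f(x)\,dx$. The key observation is that left multiplication $x\mapsto\sigma x$ and right multiplication $x\mapsto x\sigma$ are $F$-linear automorphisms of $\M{N}{F}\cong F^n$; transporting them through the fixed isomorphism $T$ they become elements of $\GL{n}{F}$, so by the remark following the definition of $\calL(\M{N}{F})$ (equivalently, by theorem \ref{theorem_main_result} applied to $fT$), the functions $x\mapsto f(\sigma x)$ and $x\mapsto f(x\sigma)$ again lie in $\calL(\M{N}{F})$.

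Next I would compute the Jacobian of each of these two linear maps on $\M{N}{F}$. Viewing $\M{N}{F}$ as $N$ copies of the column space $F^N$ (for left multiplication) or $N$ copies of the row space (for right multiplication), the map $x\mapsto\sigma x$ acts as $\sigma$ on each of the $N$ columns, so its determinant as an endomorphism of $F^{n}$ is $(\det\sigma)^{N}$; similarly $x\mapsto x\sigma$ has determinant $(\det\sigma)^{N}$. Since $n=N^{2}$, in both cases $\abs{\det(\text{Jacobian})}=\abs{\det\sigma}^{N}$. Theorem \ref{theorem_main_result} then gives
\[\int^{\ssc{\M{N}{F}}}f(\sigma x)\,dx=\abs{\det\sigma}^{-N}\int^{\ssc{\M{N}{F}}}f(x)\,dx,\]
and the same identity with $x\sigma$ in place of $\sigma x$.

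Now I would unwind the definitions. For $\tau\in\GL{N}{F}$ we have $\phi(\sigma\tau)=f(\sigma\tau)\abs{\det(\sigma\tau)}^{n}=f(\sigma\tau)\abs{\det\sigma}^{n}\abs{\det\tau}^{n}$, using that $\abs{\cdot}$ is multiplicative (theorem \ref{theorem_main_properties_of_integral}). Hence $\tau\mapsto\phi(\sigma\tau)\abs{\det\tau}^{-n}$ is the restriction to $\GL{N}{F}$ of $x\mapsto f(\sigma x)\abs{\det\sigma}^{n}$, which lies in $\calL(\M{N}{F})$ by the first paragraph; this shows $\tau\mapsto\phi(\sigma\tau)$ belongs to $\calL(\GL{N}{F})$. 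Therefore
\[\int^{\ssc{\GL{N}{F}}}\phi(\sigma\tau)\,d\tau
 =\abs{\det\sigma}^{n}\int^{\ssc{\M{N}{F}}}f(\sigma x)\,dx
 =\abs{\det\sigma}^{n}\abs{\det\sigma}^{-N}\int^{\ssc{\M{N}{F}}}f(x)\,dx,\]
and since $n=N^{2}$ is not equal to $N$ in general this last exponent is $n-N=N(N-1)$ — so I must double-check the Jacobian computation, because the answer is supposed to be translation \emph{invariant} with no extra factor. The resolution is that the relevant linear map on $\M{N}{F}$ is $x\mapsto\sigma x$ on all $N$ columns simultaneously, giving determinant $(\det\sigma)^{N}$, while the factor $\abs{\det(\sigma\tau)}^{n}/\abs{\det\tau}^{n}=\abs{\det\sigma}^{n}$ must exactly cancel it; so in fact the correct Jacobian must be $\abs{\det\sigma}^{n}$, meaning $x\mapsto\sigma x$ should be read as acting with determinant $(\det\sigma)^{N}$ on a space that the normalization in the definition of $\calL(\GL{N}{F})$ is designed to compensate. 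The right way to organize this cleanly is: apply the substitution at the level of $\M{N}{F}$ only after incorporating the $\abs{\det x}^{-n}$ weight, i.e. work directly with the function $x\mapsto\phi(x)\abs{\det x}^{-n}$ and change variables $x\mapsto\sigma x$, noting $\abs{\det(\sigma x)}^{-n}=\abs{\det\sigma}^{-n}\abs{\det x}^{-n}$, so that
\[\int^{\ssc{\GL{N}{F}}}\phi(\sigma\tau)\,d\tau
 =\int^{\ssc{\M{N}{F}}}\phi(\sigma x)\abs{\det(\sigma x)}^{-n}\abs{\det\sigma}^{n}\,dx
 =\abs{\det\sigma}^{n}\abs{\det\sigma}^{-N}\int^{\ssc{\M{N}{F}}}\phi(x)\abs{\det x}^{-n}\,dx.\]

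**The main obstacle** will be getting the bookkeeping of exponents exactly right: one needs the Jacobian of left (resp. right) multiplication by $\sigma$ on $\M{N}{F}$ to be precisely $\abs{\det\sigma}^{N}$, and then to see that the weight $\abs{\det x}^{-n}$ with $n=N^{2}$ is chosen so that $N\cdot(\text{correction from the weight})$ cancels it — this is exactly the classical normalization that makes $\abs{\det x}^{-N}\,dx$ (not $\abs{\det x}^{-n}\,dx$) the Haar measure on $\GL{N}$, so I would first recheck whether the definition intends the exponent $n=N^{2}$ or $N$, and reconcile accordingly. Granting the correct normalization, the computation above yields $\int^{\ssc{\GL{N}{F}}}\phi(\sigma\tau)\,d\tau=\int^{\ssc{\GL{N}{F}}}\phi(\tau)\,d\tau$, and the right-translation case is identical with $x\mapsto x\sigma$, whose Jacobian is likewise $\abs{\det\sigma}^{N}$. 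Everything else — membership in the relevant function spaces, $\CG$-linearity, well-definedness of the extension — is supplied verbatim by theorem \ref{theorem_main_result}, the remarks on $\calL(\M{N}{F})$, and the multiplicativity of $\abs{\cdot}$ from theorem \ref{theorem_main_properties_of_integral}.
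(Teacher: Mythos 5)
Your proposal follows exactly the route of the paper's own proof: write $\tau\mapsto\phi(\sigma\tau)\abs{\det\tau}^{-n}$ as the restriction to $\GL{N}{F}$ of $\abs{\det\sigma}^{n}\,f\circ l_{\sigma}$ (resp.\ $f\circ r_{\sigma}$), apply theorem \ref{theorem_main_result} on $\M{N}{F}\cong F^{n}$, and compute the determinant of left/right multiplication by $\sigma$. Your Jacobian is the correct one: on the $N^{2}$-dimensional space $\M{N}{F}$, right (or left) multiplication by $\sigma$ acts block-diagonally by $N$ copies of $\sigma^{t}$ (resp.\ $\sigma$), so $\det r_{\sigma}=\det l_{\sigma}=(\det\sigma)^{N}$; a sanity check with $\sigma=\lambda I$ gives $\lambda^{N^{2}}=(\det\sigma)^{N}$, confirming the exponent $N$. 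At this very point the paper asserts $\det r_{\sigma}=(\det\sigma)^{n}$ with $n=N^{2}$, which is a miscalculation (its block matrix has $N$ blocks, not $n$), and it is precisely this slip that makes the paper's bookkeeping close against the weight $\abs{\det\tau}^{-n}$ appearing in the definition of $\calL(\GL{N}{F})$. So the tension you detected is real and sits in the normalization, not in your computation: for the proposition to hold, the weight must carry the classical exponent $N$ (so that the measure being lifted is the genuine Haar measure $\abs{\det x}^{-N}\,dx$ on $\GL{N}{\res{F}}$); with the literal exponent $n=N^{2}$ the change of variables leaves an uncancelled factor $\abs{\det\sigma}^{\,n-N}$ and the statement fails as written.

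The one genuine flaw in your write-up is the passage where you talk yourself into ``the correct Jacobian must be $\abs{\det\sigma}^{n}$'': that is reasoning backwards from the desired conclusion, and it is false, as your own column-by-column description (or the scalar-matrix check above) shows. Delete that paragraph, commit to $\abs{\det l_{\sigma}}=\abs{\det r_{\sigma}}=\abs{\det\sigma}^{N}$, and read the exponent in the definition as $N$; then your argument is complete and unconditional: membership of $\tau\mapsto\phi(\sigma\tau)$ and $\tau\mapsto\phi(\tau\sigma)$ in $\calL(\GL{N}{F})$ follows, as you say, from theorem \ref{theorem_main_result} applied to $x\mapsto f(\sigma x)$ and $x\mapsto f(x\sigma)$ together with multiplicativity of $\abs{\cdot}$, and the cancellation $\abs{\det\sigma}^{N}\abs{\det\sigma}^{-N}=1$ yields both invariance identities, exactly as in the paper once its determinant claim is corrected.
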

\begin{proof}
Let $r_{\sigma}$ (resp. $l_{\sigma}$) denote the element of $\GL{n}{F}$ (identified with $\mbox{GL}(\M{N}{F})$ via. $T$) defined by right (resp. left) multiplication by $\sigma$. Let $\tau\mapsto\phi(\tau)\abs{\det\tau}^{-n}$ be the restriction of $f\in\calL(\M{N}{F})$ to $\GL{N}{F}$, say. The function
\begin{align*}\tau\mapsto\, &\phi(\tau\sigma)\abs{\det{\tau}}^{-n}\\
    =&\abs{\det{\sigma}}^n\phi(\tau\sigma)\abs{\det{\tau\sigma}}^{-n}\\
    =&\abs{\det{\sigma}}^n
        \phi\circ r_{\sigma}(\tau)\abs{\det(r_{\sigma}\tau)}^{-n}
\end{align*}
is the restriction of $\abs{\det\sigma}^n f\circ r_{\sigma}\in\calL(\M{N}{F})$ to $\GL{N}{F}$.

Theorem \ref{theorem_main_result} therefore implies that
\begin{align*}
	\int^{\ssc{\GL{N}{F}}}\phi(\tau\sigma)d\tau
	&=\int^{\ssc{\M{N}{F}}}\abs{\det\sigma}^n f\circ r_{\sigma}(x)\,dx\\
	&=\abs{\det{\sigma}}^n\abs{\det r_{\sigma}}^{-1} \int^{\ssc{\M{N}{F}}}f(x)\,dx\\
	&=\abs{\det{\sigma}}^n\abs{\det r_{\sigma}}^{-1}  \int^{\ssc{\GL{N}{F}}} \phi(\tau)d\tau.
\end{align*}
Note that $\det{\sigma}$ is the determinant of $\sigma$ as an $N\times N$ matrix, and $\det r_{\sigma}$ is the determinant of $r_{\sigma}$ as an automorphism of the $N^2$-dimensional space $\M{N}{F}$.

To complete the proof for $r_{\sigma}$ it suffices to show that $\det{r_{\sigma}}=\det{\sigma}^n$. Let $e_{i,j}$ denote the $N\times N$ matrix with a $1$ in the $i,j$ position and zeros elsewhere. With respect to the ordered basis \[e_{1,1},e_{1,2},\dots,e_{1,N},e_{2,1},\dots,e_{2,N},\dots,e_{N,1},\dots,e_{N,N},\] $r(\sigma)$ acts as the block matrix
\[\left(\begin{array}{ccc}\sigma^t&&\\&\ddots&\\&&\sigma^t\end{array}\right),\]
($^t$ denotes transpose) which has determinant $\det{\sigma}^n$ as required.

The proof with $l_{\sigma}$ in place of $r_{\sigma}$ differs only in notation, except that one should use the ordered basis \[e_{1,1},e_{2,1},\dots,e_{N,1},e_{1,2},\dots,e_{N,2},\dots,e_{1,N},\dots,e_{N,N}.\]
\end{proof}

So we have obtained a translation invariant integral on the algebraic group $\GL{N}{F}$. Just as the integrals on $F$ and $F^n$ lift the usual Haar integral on $\res{F}$ and $\res{F}^n$, so too does this integral incorporate the Haar integral on $\GL{N}{\res{F}}$. To demonstrate this most clearly, it is prudent to now make the following assumptions on the chosen isomorphism $T$, which ensure a functoriality between our algebraic groups at the level of $\res{F}$ and at the level of $F$: 
\begin{enumerate}
\item $T$ restricts to an $\roi{F}$-linear isomorphism $\roi{F}^n\to\M{N}{\roi{F}}$.
\item There exists a $\res{F}$-linear isomorphism $\res{T}:\res{F}^n\to\M{N}{\res{F}}$ which makes the diagram commute:
	\[\begin{CD}
		\roi{F}^n	@>T>>		\M{N}{\roi{F}}	\\
		@V{}VV				@VV{}V		\\
		\res{F}^n	@>>{\res{T}}>	\M{N}{\res{F}}	
	\end{CD}\]
where the vertical arrows are coordinate-wise residue homomorphisms.
\end{enumerate}

\begin{remark}
These assumptions holds in particular if we identify $\M{N}{F}$ with $F^{n^2}$ in the most natural way, via the standard basis of $F^{n^2}$ and the basis of $\M{N}{F}$ used in proposition \ref{prop_trans_invariance_of_integral_on_GL}.
\end{remark}

Further, we now normalise the Haar measures on $\M{N}{\res{F}}$ and $\GL{N}{\res{F}}$ in the following way: give $\M{N}{\res{F}}$ the Haar measure obtained by pushing forward the product measure on $F^n$ via $T$, and then give $\GL{N}{\res{F}}$ the standard Haar measure $d_{\mbox{\scriptsize GL}_N}x=\det{x}^{-n}d_{\mbox{\scriptsize M}_N}x$. Such normalisations are not essential, but otherwise extraneous constants would appear in formulae below. It will be useful to call a complex-valued function on $\GL{N}{\res{F}}$ GL-Fubini if its pull back to $\res{F}^{n}$ via $\res{T}$ is GL-Fubini in the sense already defined. Again, note that a Schwartz-Bruhat function on $\M{N}{\res{F}}$ is certainly GL-Fubini.

We have already defined what is meant by the lift of a Haar integrable from $\res{F}$, $\res{F}^n$. Let us generalise this notion further:

\begin{definition}\label{definition_lift_to_algebraic_group}
Let $G$ denote either of the algebraic groups $\mbox{M}_N$, $\mbox{GL}_N$. Given a complex valued function $g$ on $G(\res{F})$, let $g^0$ be the complex valued function on $G(F)$ defined by
\begin{align*}
	g^0:F&\to\Comp\\
	x&\mapsto \begin{cases}
		g(\Res{x}) & x\in G(\roi{F}) \\
		0 & \mbox{otherwise.}
		\end{cases}
\end{align*}
\end{definition}

Then the compatibility between the integrals on $\mbox{M}_N$ at the level of $\res{F}$ and $F$ is the following:

\begin{proposition}
Suppose that $g$ is a complex valued Haar integrable function on $\M{N}{\res{F}}$ which is a GL-Fubini function on $\res{F}^n$ (eg. $g$ a Schwartz-Bruhat function on $\M{N}{\res{F}}$). Then $g^0$ belongs to $\calL(\M{N}{F})$, and \[\int^{\ssc{\M{N}{F}}} g^0(x)\,dx=\int_{\ssc{\M{N}{\res{F}}}} g(u)\,du.\]
\end{proposition}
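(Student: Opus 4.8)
The plan is to reduce everything, via the chosen isomorphism $T$, to facts already established about $\calL(F^n,\mbox{GL}_n)$ and the lifting proposition \ref{proposition_integral_of_lifted_function}. Write $\res{T}:\res{F}^n\to\M{N}{\res{F}}$ for the residue isomorphism from the commuting diagram, and let $h=g\circ\res{T}$, a complex-valued function on $\res{F}^n$. By hypothesis $g$ is GL-Fubini as a function on $\res{F}^n$ (meaning $h$ is GL-Fubini in the sense of the earlier definition), so $h^0\in\calL(F^n,\mbox{GL}_n)$ directly from the definition of that space (take $\tau=\mbox{id}$, $a=0$, $\g=0$ in the spanning set). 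The first substantive step is the commutation identity $g^0\circ T=h^0$ as functions on $F^n$: this is exactly where assumptions (i) and (ii) on $T$ enter. Indeed, for $x\in F^n$, the condition $T(x)\in\M{N}{\roi{F}}$ is equivalent to $x\in\roi{F}^n$ by (i), and when $x\in\roi{F}^n$ one has $\res{T(x)}=\res{T}(\res{x})$ by the commuting square (ii), so $g^0(T(x))=g(\res{T(x)})=g(\res{T}(\res{x}))=h(\res{x})=h^0(x)$; off $\roi{F}^n$ both sides vanish.

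Granting that identity, $g^0\circ T=h^0\in\calL(F^n,\mbox{GL}_n)$, which by the \emph{definition} of $\calL(\M{N}{F})$ means precisely that $g^0\in\calL(\M{N}{F})$, and moreover
\[\int^{\ssc{\M{N}{F}}} g^0(x)\,dx=\int^{F^n} (g^0\circ T)(x)\,dx=\int^{F^n} h^0(x)\,dx.\]
Now apply proposition \ref{proposition_integral_of_lifted_function} to the Fubini function $h$ on $\res{F}^n$ (with all $\g_i=0$, so the monomial factor $X^{\sum\g_i}$ is trivial): $h^0$ is strongly Fubini with $\int^{F^n} h^0(x)\,dx=\int_{\res{F}^n} h(u)\,du$. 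Finally, unwind the normalisation of the Haar measure on $\M{N}{\res{F}}$: it was defined as the pushforward of the product measure on $\res{F}^n$ along $\res{T}$, so by change of variables $\int_{\res{F}^n} h(u)\,du=\int_{\res{F}^n} g(\res{T}(u))\,du=\int_{\ssc{\M{N}{\res{F}}}} g(u)\,du$. Chaining these equalities yields the claimed formula.

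The only genuine obstacle is the first step — verifying $g^0\circ T=h^0$, i.e. that $T$ being an $\roi F$-linear isomorphism onto $\M N{\roi F}$ really does identify the "integral points" correctly and intertwines the two residue maps. This is routine given assumptions (i) and (ii), but it is the one place the functoriality hypotheses on $T$ are actually used, and it is worth spelling out that $T$ is defined over $F$ (so $T$ and $T^{-1}$ both preserve the respective rings of integers under (i), giving the equivalence $x\in\roi F^n\iff T(x)\in\M N{\roi F}$). Everything else is a bookkeeping chain through definitions and previously proved statements; in particular no new estimate or induction is needed, since proposition \ref{proposition_integral_of_lifted_function} already does the analytic work. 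One should also remark in passing that the parenthetical example — $g$ a Schwartz-Bruhat function on $\M N{\res F}$ — is GL-Fubini by remark \ref{remark_on_GL_Fubini_functions} together with the fact that $\res T$ is a linear isomorphism, so Schwartz-Bruhat functions pull back to Schwartz-Bruhat (hence GL-Fubini) functions on $\res F^n$.
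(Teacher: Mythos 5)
Your proposal is correct and follows essentially the same route as the paper: pull $g$ back to $\res{F}^n$ via $\res{T}$, identify $g^0\circ T$ with the lift of that pullback (this is where hypotheses (i) and (ii) on $T$ are used), invoke proposition \ref{proposition_integral_of_lifted_function}, and conclude with the chosen normalisation of Haar measure on $\M{N}{\res{F}}$. Your write-up merely makes explicit two points the paper leaves implicit — the pointwise verification of $g^0\circ T=h^0$ and the membership $h^0\in\calL(F^n,\mbox{GL}_n)$ from the spanning definition — which is fine.
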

\begin{proof}
By the existence of $\res{T}$ and its compatibility with $T$ we have and equality of functions on $\M{N}{F}$: \[ (g\res{T}^{-1})^0T=g^0.\] Definition of the integral on $\M{N}{F}$ implies \[\int^{\ssc{\M{N}{F}}}g^0(x)\,dx=\int^{F^n}(g\res{T}^{-1})^0(x)\,dx.\] Taking $G$ to be $n$ copies of the additive group, we showed in proposition \ref{proposition_integral_of_lifted_function} that the corresponding result to this one holds; so \[\int^{F^n}(g\res{T}^{-1})^0(x)\,dx=\int_{\res{F}^n}g\res{T}^{-1}(u)\,du.\] Finally, our normalisation of the Haar measure on $\M{N}{\res{F}}$ implies \[\int_{\res{F}^n}g\res{T}^{-1}(u)\,du=\int_{\M{N}{\res{F}}} g(u)\,du,\] which completes the proof.
\end{proof}

And now we prove the same result for $\mbox{GL}_N$:
\begin{proposition}
Suppose that $g$ is a complex valued Schwartz-Bruhat function on $\GL{N}{\res{F}}$ such that \[f(x)=\begin{cases} g(x)\abs{\det{x}}^{-n} & x\in\GL{N}{\res{F}} \\ 0 & \det{x}=0 \end{cases}\] is GL-Fubini on $\M{N}{\res{F}}$. Then $g^0$ belongs to $\calL(\GL{N}{F})$, and \[\int^{\ssc{\GL{N}{F}}} g^0(\tau)\,d\tau=\int_{\ssc{\GL{N}{\res{F}}}} g(u)\,du.\]
\end{proposition}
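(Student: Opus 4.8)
The plan is to exhibit the specific element of $\calL(\M{N}{F})$ that witnesses $g^0\in\calL(\GL{N}{F})$, namely the lift $f^0$ of $f$ in the sense of Definition~\ref{definition_lift_to_algebraic_group} (so $f^0(x)=f(\res{x})$ for $x\in\M{N}{\roi{F}}$ and $f^0(x)=0$ otherwise), and then to read off the value of the integral from the preceding proposition applied to $f$.

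First I would establish the pointwise identity
\[f^0(\tau)=g^0(\tau)\abs{\det\tau}^{-n}\qquad(\tau\in\GL{N}{F}),\]
by a short case analysis. If $\tau\notin\M{N}{\roi{F}}$, then also $\tau\notin\GL{N}{\roi{F}}$, and both sides vanish. If $\tau\in\M{N}{\roi{F}}$ but $\res{\tau}$ is singular, then $f^0(\tau)=f(\res{\tau})=0$ because $f$ vanishes on $\set{\det=0}$, while again $\tau\notin\GL{N}{\roi{F}}$, so $g^0(\tau)=0$. If $\tau\in\GL{N}{\roi{F}}$, then $\det\tau\in\mult{\roi{F}}$, so $\nu(\det\tau)=0$ and hence $\abs{\det\tau}$ lies in $\Comp$ and equals $\abs{\det\res{\tau}}$, the absolute value of $\det\res{\tau}$ on $\res{F}$; thus $g^0(\tau)\abs{\det\tau}^{-n}=g(\res{\tau})\abs{\det\res{\tau}}^{-n}=f(\res{\tau})=f^0(\tau)$. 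Next I would check that $f^0\in\calL(\M{N}{F})$: the commuting square relating $T$ and $\res{T}$ gives, just as in the proof of the preceding proposition, an equality of functions on $F^n$
\[f^0 T=(f\circ\res{T})^0,\]
and since $f$ is GL-Fubini on $\M{N}{\res{F}}$, the pullback $f\circ\res{T}$ is GL-Fubini on $\res{F}^n$, so its lift $(f\circ\res{T})^0$ is among the spanning functions $g^{a,\g}\circ\tau$ of $\calL(F^n,\mbox{GL}_n)$ (with $a=0$, $\g=0$, and the matrix equal to the identity); hence $f^0 T\in\calL(F^n,\mbox{GL}_n)$, i.e. $f^0\in\calL(\M{N}{F})$. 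Together with the pointwise identity and the well-definedness of $\int^{\ssc{\GL{N}{F}}}$, this shows $g^0\in\calL(\GL{N}{F})$ and
\[\int^{\ssc{\GL{N}{F}}}g^0(\tau)\,d\tau=\int^{\ssc{\M{N}{F}}}f^0(x)\,dx.\]

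It then remains to evaluate the right-hand side. Being GL-Fubini on $\M{N}{\res{F}}$, the function $f$ is in particular Haar integrable there, so the preceding proposition applied to $f$ gives $\int^{\ssc{\M{N}{F}}}f^0(x)\,dx=\int_{\ssc{\M{N}{\res{F}}}}f(u)\,du$. Finally, as $\M{N}{\res{F}}$ is the disjoint union of $\GL{N}{\res{F}}$ and the null set $\set{\det=0}$, and using the normalisation $d_{\mbox{\scriptsize GL}_N}u=\abs{\det u}^{-n}\,d_{\mbox{\scriptsize M}_N}u$ of the Haar measure on $\GL{N}{\res{F}}$, this integral equals $\int_{\ssc{\GL{N}{\res{F}}}}g(u)\abs{\det u}^{-n}\,d_{\mbox{\scriptsize M}_N}u=\int_{\ssc{\GL{N}{\res{F}}}}g(u)\,du$, which completes the proof.

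I do not expect a genuine obstacle here: the only step needing attention is the case analysis for the pointwise identity, and in particular the observation that an integral matrix whose determinant is a non-unit reduces to a singular matrix over $\res{F}$ --- on which $f$ has been arranged to vanish --- so that no contribution is lost in passing between $\mbox{GL}_N$ and $\mbox{M}_N$. The remaining ingredients are all invocations of results already established: the behaviour of lifts under $T$ and $\res{T}$, the description of $\calL(F^n,\mbox{GL}_n)$, the preceding proposition for $\mbox{M}_N$, and the measure normalisations.
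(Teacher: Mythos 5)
Your proposal is correct and follows essentially the same route as the paper: lift $f$ to $f^0\in\calL(\M{N}{F})$, verify the pointwise identity $f^0(\tau)=g^0(\tau)\abs{\det\tau}^{-n}$ on $\GL{N}{F}$, and conclude via the preceding $\mbox{M}_N$ proposition and the normalisation $d_{\mbox{\scriptsize GL}_N}u=\abs{\det u}^{-n}d_{\mbox{\scriptsize M}_N}u$. Your only additions are making explicit the case analysis (singular reduction, non-integral entries) and re-deriving $f^0\in\calL(\M{N}{F})$ from the $T$--$\res{T}$ square, which the paper instead obtains directly from the previous proposition.
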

\begin{proof}
The assumption on $f$ and the previous proposition imply that $f^0$ belongs to $\calL(\M{N}{F})$. Moreover, $\tau\in\GL{N}{\roi{F}}$ implies \[f^0(\tau)=g(\res{\tau})\abs{\det{\res{\tau}}}^{-n}=g^0(\tau)\abs{\det{\tau}}^{-n},\] so that $f^0$ is an extension of $\tau\mapsto g^0(\tau)\abs{\det{\tau}}^{-n}$ from $\GL{N}{F}$ to a function in $\calL(\M{N}{F})$.

Therefore $g^0$ belongs to $\calL(\GL{N}{F})$ and
\begin{align*}
	\int^{\ssc{\GL{N}{F}}} g^0(\tau)\,d\tau
	&=\int^{\ssc{\M{N}{F}}} f^0(x)\,dx\\
	&=\int_{\ssc{\M{N}{\res{F}}}} f(u)\,du\\
	&=\int_{\ssc{\GL{N}{\res{F}}}} g(u)\,du
	\end{align*}
where the second equality follows from the previous proposition.
\end{proof}

\begin{remark}
Certainly if $g$ decreases sufficiently rapidly towards the boundary of $\GL{N}{\res{F}}$ in $\M{N}{\res{F}}$ then the hypothesis in the previous proposition will hold. In particular if $g$ is the restriction to $\GL{N}{\res{F}}$ of a Schwartz-Bruhat function on $\M{N}{\res{F}}$ and $s$ is complex with $\mbox{Re}(s)$ sufficiently large, then the result will hold for $g(\tau)\abs{\det{\tau}}^s$. This may be useful in generalising Jacquet-Godement theory \cite{Jacquet-Godement}.
\end{remark}

\section{Other algebraic groups and related problems}

\subsection{Integration over an arbitrary algebraic group}
Having established an integral on $\GL{N}{F}$, it would be useful to also be able to integrate on algebraic subgroups such as $\mbox{SL}_N(F)$ or $B_N(F)$, the group of invertible upper triangular matrices. Arguments similar to the above will surely provide such an integral, but to establish such results for an arbitrary algebraic group $G$ we require a more general abstract approach.

The author suspects that to each algebraic group $G$ their is a space of $\CG$-valued functions $\calL(G(F))$ on $G(F)$ and a linear functional $\int^{\ssc{G(F)}}$ on these functions with the following properties:

\begin{enumerate}
\item Compatibility between $\res{F}$ and $F$: if $g$ is a "nice" (eg. Schwartz-Bruhat) Haar integrable function on $G(\res{F})$, then $g^0$ (an obvious generalisation of definition \ref{definition_lift_to_algebraic_group}) belongs to $\calL(G(F))$ and \[\int^{\ssc{G(F)}} g^0(x)\,dx=\int_{\ssc{G(\res{F})}} g(u)\,du.\]
\item Translation invariance: if $f\in\calL(G(F))$ and $\tau\in G$, then $x\mapsto g(x\tau)$ is in $\calL(G(F))$, and \[\int^{\ssc{G(F)}}f(x\tau)\,dx=\int^{\ssc{G(F)}} f(x)\,dx.\]
\end{enumerate}

There should also be a left translation-invariant integral on $G(F)$, and this would coincide with the right-invariant integral if $G(\res{F})$ is unimodular.

Even for the simplest algebraic group $G=\mbox{"additive group"}$ these conditions are not enough to make the integral unique in a reasonable way; this is discussed in the first section of \cite{Morrow_1}. However, if we assume the existence of an absolute value which relates the integrals on $\mult{F}$ and $F$, the uniqueness does follow. We have observed a similar phenomenon in this paper where we constructed the integral on $F^n$ to be compatible with change of variables from $\GL{n}{F}$. So to ensure uniqueness we should add to the list the informal statement
\begin{enumerate}
\item[(iii)]  Compatibility between the integrals over different algebraic groups.
\end{enumerate}

\subsection{Subgroups of $\mbox{GL}_N$}

Once integration over algebraic subgroups of $\GL{N}{F}$ has been established, there are certain formulae which are expected to hold by analogy with the case of a local field. We quote two examples from \cite{Cartier}; for $f$ a complex-valued integrable function on $\GL{N}{\res{F}}$ (resp. on $\mbox{B}_N(\res{F})$),
\begin{align*}
\int_{\ssc{ \GL{N}{\res{F}} }} f(g)\,dg
	&=\int_{\ssc{ \GL{N}{\roi{\res{F}}} }}\int_{\ssc{ \mbox{B}_N(\res{F}) }} f(kb)\,dk\,d_R b \\
\int_{\ssc{\mbox{B}_N(\res{F})}} f(b)\,d_R b
	&=\int_{\Delta_N(\res{F})}\int_{\ssc{\mbox{U}_N(\res{F})}}f(u\lambda)\,du\,d\lambda,
\end{align*}
where $\mbox{U}_N$ denotes the u.u.t. matrices, $\Delta_N$ the diagonal matrices, and $d_R$ right Haar measure (apart from $\mbox{B}_N$, these groups are unimodular).

Writing these identities explicitly, one sees that these formulae require the class of integrable functions on $\GL{N}{F}$ to be invariant under certain polynomial changes of variables. It is therefore also important to extend the class of functions $\calL(F^n, \mbox{GL}_n)$ so that it is closed under certain polynomial changes of variables.

This is also precisely the sort of compatibility which may be important in (iii).

\subsection{Non-linear change of variables}
To develop integration on arbitrary algebraic groups and prove compatibility between them we are lead to investigate non-linear change of variables on $F^n$. A step in this direction is taken in \cite{Morrow_3} in the case of a two-dimensional local field (that is, $F$ is a complete discrete valuation field whose residue field is a local field). It is proved that if $f=g^{a,\g}$ is the lift to $F^2$ of a Schwartz-Bruhat function on $\res{F}^2$, and $h$ is a polynomial over $F$ which is not highly singular in a certain sense, then $(x,y)\mapsto f(x,y-h(x))$ is Fubini on $F^2$, and so \[\int^{F^2} f(x,y-h(x))\,dxdy=\int^{F^2} f(x,y-h(x))\,dydx=\int^{F^2} f(x,y)\,dydx.\] Note that the second equality follows simply from translation invariance of the integral.

However, the singularity assumption on $h$ is essential, for the author also proves the following:

\begin{proposition}
Suppose $F$ is a two-dimensional local field and $\res{F}$ has finite characteristic $p$. Let $h(X)=t^{-1}X^p$ and let $f$ be \emph{any} Schwartz-Bruhat function on $K\times K$. Then for all $y\in F$, the function $x\mapsto f^0(x,y-h(x))$ is integrable, with $\int^F f^0(x,y-h(x))\,dx=0$. Therefore \[\int^F\int^F f^0(x,y-h(x))\,dxdy=0,\] whereas \[\int^F\int^F f^0(x,y-h(x))\,dydx=\int\int f(u,v)\,dvdu,\] which need not be zero.
\end{proposition}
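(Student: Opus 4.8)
<br>

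The plan is to reduce the claim to a statement about the field integral $\int^F$ applied to a section, then exploit the fact that $x\mapsto t^{-1}x^p$ collapses the residue field data in a controlled way. First I would fix $y\in F$ and analyse the section $x\mapsto f^0(x, y-h(x))$, where $h(X)=t^{-1}X^p$. Since $f$ is Schwartz-Bruhat on $\res F\times\res F$, $f^0$ vanishes off $\roi F\times\roi F$, so the section vanishes unless $x\in\roi F$ and $y-h(x)\in\roi F$; because $h(x)=t^{-1}x^p$ has valuation $\nu(t^{-1}x^p)=-1+p\,\nu(x)$ in the rank-one sense (here using the split valuation, with $t$ the section of $\nu$ and $t^{-1}$ the chosen uniformiser-like element), the only way $y-h(x)$ can land in $\roi F$ for $x\in\roi F$ is that $x$ lies in a suitable translated fractional ideal of the form $t(1/p)\roi F$ — i.e.\ $x=t(1/p)w$ with $w\in\roi F$ — and then $h(x)=t^{-1}t(1)w^p = w^p$ (up to the unit coming from the splitting), which does lie in $\roi F$. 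So the section is the lift of a function supported on this smaller ideal, and by Theorem \ref{theorem_main_properties_of_integral}(iii) its integral picks up a factor $\abs{t(1/p)}=X^{1/p}$ times the residue-field integral of $w\mapsto f(\res{t(1/p)w}, \dots)$.

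The decisive step is the computation of that residue-field integral. On the residue field $\res F$, which has characteristic $p$, the $p$-power Frobenius $w\mapsto w^p$ is additive and injective, but the relevant contribution is $\int_{\res F} f(0, \res y - \bar w^p)\,d\bar w$ (the first slot collapses to $0$ because $\res{t(1/p)w}$ is forced into the maximal ideal, or rather because the valuation bookkeeping sends the first coordinate into a strictly smaller ideal and hence to residue $0$). After the change of variables $\bar w\mapsto \bar w$ — noting $d(\bar w^p) = 0$ is \emph{not} what happens; rather the point is that on a non-discrete locally compact field of characteristic $p$ the image of Frobenius is a proper closed subfield of measure zero (or: Frobenius is not surjective and the pushforward of Haar measure under $w\mapsto w^p$ is singular) — one concludes $\int_{\res F} f(0,\res y-\bar w^p)\,d\bar w = 0$ because we are integrating over all of $\res F$ a function that depends on $\bar w$ only through $\bar w^p$, and the fibres of Frobenius are single points while its image has measure zero. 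More carefully: $w\mapsto w^p$ from $\res F$ to $\res F$ has image $\res F^p$, a closed subgroup of Haar measure zero (since $[\res F:\res F^p]$ is infinite for a non-discrete local field of char $p$, e.g.\ $\mathbb F_q((T))$), so $\int_{\res F}\psi(\bar w^p)\,d\bar w$ vanishes for any integrable $\psi$.

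Once $\int^F f^0(x,y-h(x))\,dx = 0$ for every $y\in F$, integrating in $y$ gives $\int^F\int^F f^0(x,y-h(x))\,dx\,dy = 0$ immediately, since the inner integral is the zero function of $y$. For the reversed order, I would first do the $y$-integral: translation invariance of $\int^F$ (Theorem \ref{theorem_main_properties_of_integral}(ii)) gives $\int^F f^0(x,y-h(x))\,dy = \int^F f^0(x,y)\,dy$ for each fixed $x$, and then Remark \ref{remark_integral_of_lifted_function} (or Proposition \ref{proposition_integral_of_lifted_function}) identifies $x\mapsto \int^F f^0(x,y)\,dy$ as the lift of $u\mapsto \int_{\res F} f(u,v)\,dv$, whose $F$-integral is $\int_{\res F}\int_{\res F} f(u,v)\,dv\,du$. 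This is nonzero for generic $f$, establishing the asymmetry.

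The main obstacle is making the measure-theoretic claim $\int_{\res F}\psi(\bar w^p)\,d\bar w = 0$ rigorous and correctly tracking the valuation bookkeeping that forces the first coordinate to residue $0$; in particular one must be careful that $f^0$, being defined only on $\roi F\times\roi F$, interacts correctly with the translated fractional ideal $t(1/p)\roi F$ (which requires $1/p\in\Gamma$, i.e.\ $\Gamma$ divisible, or at least that $t$ extends suitably — in the two-dimensional local field case $\Gamma=\mathbb Z$ and one instead works with $h(X)=t^{-1}X^p$ landing correctly, so the correct reading is that the section is supported where $\nu(x)\ge 1$ after rescaling, or one passes to the statement as literally about $\int^F$ with the lift at the appropriate $\gamma$). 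The cleanest route avoids fractional $\Gamma$-values by observing directly that for $x,y\in\roi F$ with $f^0(x,y-h(x))\ne 0$ one needs $h(x)=t^{-1}x^p\in\roi F$, forcing $\nu(x^p)\ge\nu(t)$, and then computing the residue of $x$ and of $y-h(x)$ explicitly; the residue of the $h(x)$ term is a $p$-th power in $\res F$, and summing over the fibre of residues yields a Frobenius-twisted integral which vanishes by non-surjectivity of Frobenius on the non-discrete field $\res F$. I would state this last fact as a short lemma and cite local field structure theory for it.
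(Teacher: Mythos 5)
Your central vanishing lemma is false, and the reduction to it is also not valid. The lemma you propose --- that $\int_{\res{F}}\psi(\bar w^{p})\,d\bar w=0$ for every Haar integrable $\psi$ because the image of Frobenius is a null set --- fails already for $\psi=\Char{\roi{\res{F}}}$: then $\psi(\bar w^{p})=\Char{\roi{\res{F}}}(\bar w)$ and the integral is $1$. Knowing that $\res{F}^{p}$ has Haar measure zero tells you the pushforward of Haar measure under Frobenius is singular, not that it is zero; $\int\psi(\bar w^{p})\,d\bar w$ is integration against that pushforward, so there is no reason for it to vanish, and positivity shows it generally does not. The reduction is equally problematic: you parametrise the support by $x=t(1/p)w$, but $\Gamma=\mathbb{Z}$ for a two-dimensional local field, so the ideal $t(1/p)\roi{F}$ does not exist, and your attempted repair does not recover the claimed Frobenius-twisted integrand. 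If one does the bookkeeping honestly, for $y\in\roi{F}$ the support condition forces $\nu(x)\ge1$, whence $\nu(t^{-1}x^{p})=p\nu(x)-1\ge p-1\ge1$, so $\res{t^{-1}x^{p}}=0$ and the section is the \emph{constant} $f(0,\res{y})$ on $t(1)\roi{F}$; no integrand of the form $f(0,\res{y}-\bar w^{p})$ ever appears.

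The mechanism that actually makes the proposition work (this is the computation in \cite{Morrow_3}; the present paper only quotes the statement) is different. For every $y$ the section $x\mapsto f^{0}(x,y-t^{-1}x^{p})$ turns out to be a constant multiple of $\Char{b+t(1)\roi{F}}$ for a single $b$, or identically zero: it vanishes identically if $\nu(y)\le-2$, or if $\nu(y)=-1$ and $\res{ty}$ is not a $p$-th power (here imperfectness of $\res{F}$ enters, but only to shrink the support); if $\nu(y)\ge0$ it equals $f(0,\res{y})\Char{t(1)\roi{F}}$; and if $\nu(y)=-1$ with $\res{ty}=c^{p}$ it equals $f(c,\res{y-t^{-1}(c')^{p}})\Char{c'+t(1)\roi{F}}$ for a lift $c'$ of $c$, the point being that $x\mapsto t^{-1}x^{p}$ maps each residue class into a single residue class, so the section is constant on its support. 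Now $\Char{b+t(1)\roi{F}}$ is the lift at $b,0$ of the characteristic function of the single point $\res{b}\in\res{F}$, which is Haar integrable with integral zero; hence each section is integrable with $\int^{F}f^{0}(x,y-h(x))\,dx=0$, and the $dx\,dy$ iterated integral vanishes. Your treatment of the reversed order (translation invariance in $y$, then Proposition \ref{proposition_integral_of_lifted_function}) is correct, but the half of the argument that carries the content of the proposition needs to be replaced as above.
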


Whether this will cause a problem in verifying existence of integral on algebraic groups is unclear to the author. If such "wild" changes of variable do not appear when changing charts on one's algebraic group, then this may not be too serious. However, it is certainly an unexpected result; it appears to be a measure-theoretic consequence of the characteristic $p$ local field $\res{F}$ being imperfect. See \cite{Morrow_3} for further discussion.

\mbox{}\\
	Matthew T. Morrow\\
	Maths and Physics Building,\\
	University of Nottingham,\\
	University Park,\\
	Nottingham\\
	NG7 2RD\\
	United Kingdom\\
   	\email{matthew.morrow@maths.nottingham.ac.uk}

\end{document}